\documentclass[11pt,a4paper]{article}

\usepackage{amssymb}
\usepackage[usenames]{color}
\usepackage{theorem}
\usepackage[utf8]{inputenc}
\usepackage{enumerate}
\usepackage{amsmath}
\usepackage{comment}

\numberwithin{equation}{section}

\newtheorem{theorem}{Theorem}[section]
\newtheorem{proposition}[theorem]{Proposition}
\newtheorem{lemma}[theorem]{Lemma}
\newtheorem{corollary}[theorem]{Corollary}
\newtheorem{definition}[theorem]{Definition}
\newtheorem{remark}[theorem]{Remark}

\newenvironment{proof}{\noindent\textbf{Proof}}
                      {$\Box$\vskip\theorempostskipamount}

 \begin{document}

\title{\textbf{Twisted associativity of the cyclically reduced product of words, part 1}}

\author{Carmelo Vaccaro}

\date{}

\maketitle

\begin{abstract} The cyclically reduced product of two words $u, v$, denoted $u * v$, is the cyclically reduced form of the concatenation of $u$ by $v$. This product is not associative. Recently S. V. Ivanov has proved that the Andrews-Curtis conjecture can be restated in terms of the cyclically reduced product and cyclic permutations instead of the reduced product and conjugations.

In a previous paper we have started a thorough study of $*$ and of the structure of the set of cyclically reduced words $\hat{\mathcal{F}}(X)$ equipped with $*$. In particular we have found that a certain number of properties of the free group equipped with the reduced product can be generalized to $(\hat{\mathcal{F}}(X), *)$.

In this paper we continue this study by proving that a generalized version of the associative property holds for $*$ in a special case. In a following paper we will prove that a more general version of the associative property holds for any case.
 \end{abstract}

\smallskip \smallskip

\textit{Key words}: cyclically reduced product, associative property, free monoid, free group, identities among relations.

\smallskip \smallskip

\textit{2010 Mathematics Subject Classification}: 20E05, 20M05, 68R15.

\section*{Introduction}

Let $X$ be a set of letters, let $X^{-1}$ be the set of inverses of elements of $X$ and let $\mathcal{M}(X \cup X^{-1})$ be the free monoid on $X \cup X^{-1}$. The elements of $\mathcal{M}(X \cup X^{-1})$ are the non-necessarily reduced words on $X$. We denote $\mathcal{F}(X)$ the free group on $X$ and we consider it as the subset of $\mathcal{M}(X \cup X^{-1})$ consisting of reduced words. We denote $\hat{\mathcal{F}}(X)$ the set of cyclically reduced words on $X$.

Given $v, w \in \mathcal{M}(X \cup X^{-1})$ the cyclically reduced product of $v$ by $w$, denoted $v*w$, is defined as the cyclically reduced form of the concatenation $vw$. By contrast, the reduced product of $v$ by $w$, which we denote $v \cdot w$, is defined as the reduced form of $vw$.

The cyclically reduced product has applications to the Andrews-Curtis conjecture: in \cite{Ivanov06} and \cite{Ivanov18} S. V. Ivanov has proved that the conjecture (with and without stabilizations) is true if and only if in the definition of the conjecture we replace the operations of reduced product and conjugations with the cyclically reduced product and cyclic permutations. The importance of this result stems from the fact that while there are infinitely many conjugates of one word, there are only finitely many cyclic permutations, thus making much easier the search of Andrews-Curtis trivializations by enumerations of relators, like for example the approaches used in \cite{BMc} or \cite{PU}. 

\smallskip

The set $\hat{\mathcal{F}}(X)$ is closed with respect to $*$, like the free group is closed with respect to $\cdot$, but the structure of $(\hat{\mathcal{F}}(X), *)$ is much less nice than that of $(\mathcal{F}(X), \cdot)$, mainly because $*$ is not associative.

However $(\hat{\mathcal{F}}(X), *)$ has interesting properties. Indeed we have proved in \cite{FirstArticle} the following facts concerning $\hat{\mathcal{F}}(X)$. There is a unique identity element and each element has a unique inverse. There are no idempotents except 1. The Latin square property does not hold because $*$ is not cancellative, however a generalization of this property holds true: given $u, w \in \hat{\mathcal{F}}(X)$ with $w \neq 1$ there exist infinitely many pairs of words $v_1, v_2 \in \hat{\mathcal{F}}(X)$ such that $u*v_1 = v_2*u = w$; moreover $v_1$ and $v_2$ are cyclic permutations one of the other. This is analogous to the fact that for $u, w \in \mathcal{F}(X)$ there exists a (unique) pair of words $v'_1, v'_2 \in \mathcal{F}(X)$ such that $u \cdot v'_1 = v'_2 \cdot u = w$; moreover $v'_1$ and $v'_2$ are conjugate.

\smallskip

As the latter example shows, it appears that the structure of $\hat{\mathcal{F}}(X)$ equipped with the cyclically reduced product and with cyclic permutations enjoys similar properties as those enjoyed by the free group equipped with the reduced product and conjugations. This was hinted at in the papers \cite{Rourke}, \cite{Scarabotti}, \cite{Ivanov06}, \cite{Ivanov18}. In \cite{FirstArticle} we have explored further this fact; in particular we have proved that for words $u$ and $v$ the cyclically reduced product $u*v$ is a cyclic permutation of $v*u$ and the identity among relations that follows from this fact is a generalization of the identity among relations that follows from the fact that in the free group $u \cdot v$ is a conjugate of $v \cdot u$.

The main result of this paper goes further in this direction. Indeed, as seen above, in the free group we have the following fact: let $u, w$ be words and let us set $v'_1 := u^{-1} \cdot w$ and $v'_2 := w \cdot u^{-1}$; then $u \cdot v'_1 = v'_2 \cdot u = w$. 

If $X$ has at least two elements this result is not true in $\hat{\mathcal{F}}(X)$, that is if we set $v_1 := u^{-1} * w$ and $v_2 := w * u^{-1}$ then it is not true in general that $u * v_1 = w$ and $v_2 * u = w$. Indeed let $x, y \in X$ be such that $x \neq y$ and let $u := xy$ and $w := y^2$; then $u * v_1 = x y x^{-1} y \neq w$, while $v_2 * u = w$. Let $u := xy$ and $w := x^2$; then $u * v_1 = w$, while $v_2 * u = x y^{-1} x y \neq w$. Finally if $u := y x y$ and $w := y x^{-1} y$, then $u * v_1 = y x y x^{-2} \neq w$, $v_2 * u = x^{-2} y x y \neq w$ and $u * v_1 \neq v_2 * u$.

However we prove in Theorem \ref{mainTheor1} that a generalization of this result that makes use of cyclic permutations holds true: there exist cyclic permutations $u'$ and $u''$ of $u$ such that either $u' * v_1$ and $v_2 * u''$ are cyclic permutations of $w$ or there exists a non-empty word $h$ such that the concatenations $u' h v_1 h^{-1}$ and $h v_2 h^{-1} u'$ are cyclically reduced and are cyclic permutations of $w$.

The analogy with the above result in the free group does not stop here. In the free group we have the following fact. Let $u$ and $w$ be relators of a group presentation; then the equalities $u \cdot v'_1 = w$ and $v'_2 \cdot u = w$ determine the following identities among relations: $u \centerdot u^{-1} \centerdot w \equiv w$ and $w \centerdot u^{-1} \centerdot u \equiv w$. These identities are the simplest possible types of identities among relations and we have called them \textit{strictly basic} (Definitions \ref{defBasIAR} and \ref{defBasIAR2}). We prove that the same is true for the cyclically reduced product: the identities among relations following from the fact that $w$ is a cyclic permutation of $u' * v_1$ and $v_2 * u''$ or of $u' h v_1 h^{-1}$ and $h v_2 h^{-1} u'$ are strictly basic. 

\smallskip

The result of Theorem \ref{mainTheor1} concerns a special case of the associative property. If $*$ were associative then for any $u, v, w$ we would have that $u * (v * w) = (u * v) * w$. In particular by taking $v = u^{-1} * w$ this would imply that $u * v_1 = w$. Indeed we would have that $u * v_1 = u * (u^{-1} * w) = (u * u^{-1}) * w = w$, because $u * u^{-1} = 1$ and $1 * w = w$. Therefore Theorem \ref{mainTheor1} says that a generalization of the associative property (a ``twisted" version of it) holds in the special case where $v = u^{-1}$.

In \cite{ThirdArticle} we will prove that a more general version of the associative property than that of Theorem \ref{mainTheor1} holds true for any $v$.

\bigskip

\noindent \textbf{Structure of the paper.}

In Section \ref{sec1} we give the basic definitions and prove some elementary results about the reduced and the cyclically reduced product, cyclic permutations and reversions of words. In Section \ref{sec2} we prove the main result of the paper. In Appendix \ref{secA} we define and prove some facts concerning identities among relations used in the main theorem. Finally in Appendix \ref{secB} we prove some technical results needed for the proof of the main result of the paper.

This paper is a logical continuation of \cite{FirstArticle} but can be read independently of the latter and is self contained. All the results from \cite{FirstArticle} needed in this paper are stated in Section \ref{sec1}.

\section{Words, cyclic permutations and cyclically reduced product} \label{sec1}

Let $Y$ be a set and let us consider $\mathcal{M}(Y)$, the free monoid on $Y$. The elements of $Y$ are called \textit{letters}, those of $\mathcal{M}(Y)$ \textit{the words in $Y$}. As usual given words $v, w \in \mathcal{M}(Y)$ we will denote $vw$ the product of $v$ by $w$, which is the concatenation of the words $v$ and $w$. The word with no letters, which is the identity element of $\mathcal{M}(Y)$, is denoted 1.

Let $v, w \in \mathcal{M}(Y)$; we say that $w$ is a \textit{subword of $v$} if there exist $p, q \in \mathcal{M}(Y)$ such that $v = p w q$. In this case we say that $w$ is a \textit{prefix of $v$} if $p = 1$ and that $w$ is a \textit{suffix of $v$} if $q = 1$.

Let $v = y_1 \dots y_n \in \mathcal{M}(Y)$ with $y_1, \dots, y_n \in Y$. The \textit{length of $v$} is defined as $|v| := n$. The \textit{reverse of $v$} is defined as the word $\underline{v} := y_n \dots y_1$, where the order of the letters is the reverse as that of $v$. 

Let $w_1$ and $w_2$ be words and let $w:=w_1 w_2$. The word $w_2 w_1$ is called a \textit{cyclic permutation of $w$}. Given two words $u$ and $v$ the relationship ``$u$ is a cyclic permutation of $v$" is an equivalence that we denote $u \sim v$.

Let $X$ be a set; we denote $\mathcal{F}(X)$ the free group on $X$ and we consider $\mathcal{F}(X)$ as a subset\footnote{Usually $\mathcal{F}(X)$ is considered a quotient of $\mathcal{M}(X \cup X^{-1})$, but in this paper we will not follow this habit.} of $\mathcal{M}(X \cup X^{-1})$. In particular $\mathcal{F}(X)$ is the set of \textit{reduced words on $X$}, i.e., the words of the form $x_1 \dots x_n$ with $x_i \in X \cup X^{-1}$ and $x_{i+1} \neq x_i^{-1}$ for $i = 1, \dots, n-1$.

We denote $\rho: \mathcal{M}(X \cup X^{-1}) \rightarrow \mathcal{F}(X)$ the function sending a word to its unique \textit{reduced form}. Given $u, v \in \mathcal{F}(X)$ the product\footnote{The product of two reduced words in $\mathcal{F}(X)$ does not coincide with the product of the same words in $\mathcal{M}(X \cup X^{-1})$. In particular $\mathcal{F}(X)$ is not a subgroup of $\mathcal{M}(X \cup X^{-1})$.} of $u$ by $v$ in $\mathcal{F}(X)$ is $\rho(uv)$, the reduced form of $uv$. This product will be denoted $u \cdot v$.

\medskip

\noindent \textbf{Convention} In this paper we adopt the following conventions:

1. With the term \textit{word} we mean a non-necessarily reduced word, i.e., an element of $\mathcal{M}(X \cup X^{-1})$.

2. Given $u$, $v_1, \dotsc, v_n \in \mathcal{M}(X \cup X^{-1})$, with the notation $u = v_1 \dots v_n$ we mean the equality in $\mathcal{M}(X \cup X^{-1})$ of $u$ with the concatenation of words $v_1 \dots v_n$ even if $u$ and all the $v_j$ belong to $\mathcal{F}(X)$. This kind of equality is called a \textit{factorization} of $v$ in the Combinatorics of Words literature (see \cite{Kar}, pag. 2 or \cite{ChofKar}, pag. 332). The equality in $\mathcal{F}(X)$ of $u$ with the reduced product of $v_1, \ldots, v_n$ will be denoted by $u = v_1 \cdot \, \ldots \, \cdot v_n$ and corresponds to the equality $\rho(u) = \rho(v_1 \ldots v_n)$ in $\mathcal{M}(X \cup X^{-1})$. 

\medskip

The operations of inversion and reversion of words commute one with the other, that is given a word $v$ we have that $(\underline{v})^{-1} = \underline{(v^{-1})}$. Thus we will denote $\underline{v}^{-1}$ the inverse of the reversion of $v$ without fear of ambiguity.

We say that a reduced word is \textit{cyclically reduced} if its last letter is not the inverse of the first one, that is if all its cyclic permutations are reduced. We denote $\hat{\mathcal{F}}(X)$ the set of cyclically reduced words.

Given a word $w$ there exist unique $t \in \mathcal{F}(X)$ and $c \in \hat{\mathcal{F}}(X)$ such that $\rho(w) = t c t^{-1}$. The word $c$ is called the \textit{cyclically reduced form of $w$} and is denoted $\hat{\rho}(w)$. In particular we consider the function $\hat{\rho}: \mathcal{M}(X \cup X^{-1}) \rightarrow \hat{\mathcal{F}}(X)$ sending a word to its unique cyclically reduced form. Therefore we have that $\rho(w) = t \hat{\rho}(w) t^{-1}$ and that $\rho(w)$ is cyclically reduced if and only if $t=1$. 

Given words $u$ and $v$ we denote $u*v$ the \textit{cyclically reduced product of $u$ by $v$}, i.e., $u * v := \hat{\rho}(uv)$. This product is non-associative. Indeed let $u = xy$, $v = x^{-1}$ and $w = x$; then $(u*v)*w = yx$ while $u*(v*w) = xy$.

\begin{proposition} \label{summar} \rm Let $u, v, u_1, u_2, \dots, u_n$ be words; then the following results hold: 	
	\begin{enumerate} [(1)]
		\item \label{reverse2} The reverse of $u_1 u_2 \dots u_n$ is the word $\underline{u_n} \dots \underline{u_2} \, \underline{u_1}$. 
		
		\item \label{remCycPerm} $u$ is a cyclic permutation of $v$ if and only if there exists a word $p$ such that $up = pv$. 
		
		\item \label{reversesim} If $u \sim v$ then $\underline{u} \sim \underline{v}$. 
				
		\item \label{cpCanc} Let $u$ be a cyclic permutation of $\rho(v)$. Then there exists a cyclic permutation $v'$ of $v$ such that $\rho(v') = \rho(u)$. 

		\item \label{u*vNonRed} $u*v = \rho(u) * \rho(v)$. More generally if $u_1, v_1$ are words such that $\rho(u_1) = \rho(u)$ and $\rho(v_1) = \rho(v)$, then $u*v = u_1 * v_1$. 

		\item \label{scope} The cyclically reduced form of $u$ is equal to the reduced form of some conjugate of $u$, that is there exists a word $\alpha$ such that $\hat{\rho}(u) = \rho(\alpha u \alpha^{-1})$.  

		\item \label{permCon2} Let $u$ be a cyclic permutation of $v$; then the reduced form of $u$ is the reduced form of some conjugate of $v$. 
		
		\item \label{revCRP} The reverse of $u*v$ is equal to $\underline{v}*\underline{u}$ and the cancellations made to obtain $\underline{v}*\underline{u}$ from $\underline{v} \, \underline{u}$ are the reverse of those made to obtain $u*v$ from $uv$. 
		
		\item \label{revCycRedForm} $\hat{\rho}(\underline{u}) = \underline{\hat{\rho}(u)}$ and the cancellations made to obtain $\hat{\rho}(\underline{u})$ from $\underline{u}$ are the reverse of those made to obtain $\hat{\rho}(u)$ from $u$. 
	\end{enumerate} 
\end{proposition}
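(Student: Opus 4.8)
The plan is to prove the ten items essentially in the order listed, since several of the later items reduce quickly to earlier ones, and to lean on the key structural fact that every word $w$ has a unique reduced form $\rho(w)$ and a unique factorization $\rho(w) = t\,\hat\rho(w)\,t^{-1}$. Items \eqref{reverse2} and \eqref{remCycPerm} are the true base cases. For \eqref{reverse2} I would induct on $n$: reversing a concatenation of two words $u_1 u_2$ gives $\underline{u_2}\,\underline{u_1}$ directly from the definition $\underline{y_1\cdots y_m}=y_m\cdots y_1$, and then the general case follows by associativity of concatenation. For \eqref{remCycPerm}: if $u$ is a cyclic permutation of $v$, write $v=w_1w_2$ and $u=w_2w_1$; then $p:=w_2$ satisfies $up=w_2w_1w_2=pv$. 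Conversely, if $up=pv$ with $|u|=|v|$ (lengths must agree since both sides have equal length), a short induction on $|p|$ — or a direct comparison of letters using the free-monoid cancellation — shows that $p$ is a prefix of $u$ (if $|p|\le|u|$) or a cyclic-permutation witness after reducing $|p|$ modulo $|u|$, giving the factorization that exhibits $u$ as a cyclic permutation of $v$.

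Item \eqref{reversesim} is immediate from \eqref{reverse2}: if $v=w_1w_2$ and $u=w_2w_1$ then $\underline u=\underline{w_1}\,\underline{w_2}$ and $\underline v=\underline{w_2}\,\underline{w_1}$, so $\underline u\sim\underline v$. For \eqref{cpCanc}, the point is that reduction and taking cyclic permutations interact well: given a cyclic permutation $u$ of $\rho(v)$, write $\rho(v)=w_1w_2$ with $u=w_2w_1$; choosing $v'$ to be the cyclic permutation of $v$ obtained by cutting $v$ at the position corresponding to the cut between $w_1$ and $w_2$ in $\rho(v)$, one checks $\rho(v')=w_2w_1=\rho(u)$ by tracking how the cancellations in $v\mapsto\rho(v)$ are preserved under the cyclic shift. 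Item \eqref{u*vNonRed} follows because $\hat\rho$ factors through $\rho$: since $\rho(uv)=\rho(\rho(u)\rho(v))$ (the reduced form depends only on the reduced forms of the pieces), we get $u*v=\hat\rho(uv)=\hat\rho(\rho(u)\rho(v))=\rho(u)*\rho(v)$, and the "more generally" clause is the same computation with $u_1,v_1$ in place of $\rho(u),\rho(v)$.

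Item \eqref{scope} is essentially the definition: from $\rho(u)=t\,\hat\rho(u)\,t^{-1}$ we get $\hat\rho(u)=\rho(t^{-1}\,\rho(u)\,t)=\rho(t^{-1}u t)$, so $\alpha:=t^{-1}$ works. Then \eqref{permCon2} combines \eqref{remCycPerm} and \eqref{u*vNonRed}: if $u\sim v$ pick $p$ with $up=pv$, so $\rho(u)\rho(p)$ and $\rho(p)\rho(v)$ have the same reduced form, whence $\rho(u)=\rho\bigl(\rho(p)\,\rho(v)\,\rho(p)^{-1}\bigr)=\rho(p v p^{-1})$, a conjugate of $v$. For \eqref{revCRP} the idea is to run the cancellation process defining $u*v$ from $uv$ and observe that reversing the whole word $uv$ to get $\underline v\,\underline u$ turns each adjacent cancelling pair $x x^{-1}$ into the reversed adjacent cancelling pair; since $\hat\rho$ is obtained by iterated deletion of such pairs (including the "wrap-around" pair for cyclic reduction), the sequence of deletions for $\underline v\,\underline u$ is exactly the reverse of that for $uv$, and the resulting word is $\underline{u*v}$ by \eqref{reverse2}. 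Item \eqref{revCycRedForm} is the special case of \eqref{revCRP} with, say, $v=1$ and $u$ in place of $u$ — or more precisely it is proved by the same deletion-tracking argument applied directly to $u$ versus $\underline u$, using uniqueness of the cyclically reduced form to identify the endpoint.

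The main obstacle is item \eqref{revCRP} (and with it \eqref{revCycRedForm}): stating and justifying precisely that "the cancellations made are the reverse of those made" requires fixing a canonical description of the cancellation/reduction procedure — e.g. a maximal sequence of elementary reductions, or the standard Nielsen-type confluence argument — and then arguing that the bijection $w\mapsto\underline w$ on positions carries a valid reduction sequence to a valid reduction sequence in reverse order, including the extra cyclic cancellation step that distinguishes $*$ from $\cdot$. Everything else is short bookkeeping in the free monoid, but this item is where the real content (and the care about "which cancellations") lives; I would handle it by first proving it for the plain reduced product $\underline{uv}$ versus $\underline v\,\underline u$ and then observing cyclic reduction is just one more reduction of the same type applied at the seam.
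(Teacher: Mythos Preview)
Your proposal is essentially correct, and in fact it does considerably more than the paper: the paper's ``proof'' of this proposition consists entirely of citations to the companion paper \cite{FirstArticle} (and to Lothaire and Karhum\"aki for item~(\ref{remCycPerm})), with no arguments given. So there is nothing to compare at the level of strategy; you have supplied self-contained sketches where the paper defers to prior work.

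One small slip worth fixing is in item~(\ref{cpCanc}). You write that, having split $\rho(v)=w_1w_2$ and $u=w_2w_1$, the corresponding cyclic permutation $v'$ of $v$ satisfies $\rho(v')=w_2w_1=\rho(u)$. But $u=w_2w_1$ need not be reduced, so the correct chain is $\rho(v')=\rho(w_2w_1)=\rho(u)$. More substantively, the phrase ``cutting $v$ at the position corresponding to the cut between $w_1$ and $w_2$ in $\rho(v)$'' hides a lemma: you need that whenever $\rho(v)=w_1w_2$ there is a factorization $v=v_1v_2$ in the free monoid with $\rho(v_1)=w_1$ and $\rho(v_2)=w_2$. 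This is true (it follows from the bracket-matching picture of free reduction, or from confluence of the rewriting system), but it is exactly the step where ``tracking how the cancellations are preserved under the cyclic shift'' becomes precise, so it deserves to be stated rather than gestured at. Once that factorization is in hand, $v':=v_2v_1$ gives $\rho(v')=\rho(\rho(v_2)\rho(v_1))=\rho(w_2w_1)=\rho(u)$ and the item is done.

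Your identification of (\ref{revCRP})--(\ref{revCycRedForm}) as the place where real care is needed is accurate, and your plan (fix a canonical reduction sequence, check that reversal carries it to a reduction sequence for the reversed word, then handle the one extra cyclic cancellation) is the right one.
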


\begin{proof} 
	\begin{enumerate} [(1)]

		\item See Remark 1.1 of \cite{FirstArticle}.
		
		\item See Prop. 1.3.4 of \cite{Loth} or Theor. 4 of \cite{Kar}).
		
		\item See Remark 1.5 of \cite{FirstArticle}.
		
		\item See Remark 1.12 of \cite{FirstArticle}.

		\item See Remark 2.14 of \cite{FirstArticle}. 

		\item See Remark 2.4 of \cite{FirstArticle}. 
		
		\item See Remarks 1.14 of \cite{FirstArticle}.
	
		\item See Remark 2.15 of \cite{FirstArticle}.	
		
		\item See Proposition 2.11 of \cite{FirstArticle}.			
	\end{enumerate} 
\end{proof}

\begin{remark} \label{uvw} \rm  The following result is obvious: if $u, v, w$ are words such that
$uv$ and $vw$ are reduced and $v \neq 1$ then $uvw$ is reduced. \end{remark}

\begin{remark} \label{commFG} \rm Two elements of a free group commute if and only if they are power of the same element, i.e., if $u, v \in \mathcal{F}(X)$ are such that $\rho(uv) = \rho(vu)$ then there exist $c \in \mathcal{F}(X)$ and $m, n \in \mathbb{Z}$ such that $u = \rho(c^m)$ and $v = \rho(c^n)$ (see Proposition I.2.17 of \cite{LS}).

In particular let $a, b, u \in \mathcal{F}(X)$ be such that $\rho(a u a^{-1}) = \rho(b u b^{-1})$. This implies that $\rho(b^{-1} a u) = \rho(u b^{-1} a)$ and since $\rho(b^{-1} a)$ and $u$ commute, there exist $c \in \mathcal{F}(X)$ and $m, n \in \mathbb{Z}$ such that $u = \rho(c^m)$ and $\rho(b^{-1} a) = \rho(c^n)$, in particular $a = \rho(b c^n)$.
\end{remark}

\begin{remark} \label{LeviLemma} \rm We have the following result, known as \textit{Levi's Lemma} (see \cite{ChofKar}, pag. 333 or \cite{Kar}, Theor. 2): \textit{let $u_1, u_2, v_1, v_2$ be words such that $u_1 u_2 = v_1 v_2$; then there exists a word $p$ such that either $u_1 = v_1 p$ and $v_2 = p u_2$ or $v_1 = u_1 p$ and $u_2 = p v_2$.} The two cases can be represented graphically in the following way,
	
\medskip
	
\hspace{2cm} 
\begin{tabular}{|c|c|}
	\hline 
	$\,\, u_1 \,$   & $u_2 \,$ \\
	\hline
\end{tabular}
	\quad  \hspace{1cm} and \hspace{1cm}
\begin{tabular}{|c|c|}
	\hline 
	$u_1$ & $\,\,u_2\,$ \\
	\hline
\end{tabular}
	
\hspace{2cm}
\begin{tabular}{|c|c|}
	\hline 
	$v_1$ & $\,\, v_2 \,\,\,$ \\
	\hline
\end{tabular}
	\quad \hspace{28.8mm}
\begin{tabular}{|c|c|}
	\hline 
	$\,\,v_1 \,\,$   & $v_2$ \\
	\hline
\end{tabular}
	
\medskip
	
\noindent and correspond to putting the bar separating $v_1$ and $v_2$ either inside $u_1$ or inside $u_2$. The case when this bar is exactly below that separating $u_1$ and $u_2$, i.e., when $u_1 = v_1$ and $u_2 = v_2$, can be considered a special case of both the cases. 
	
In general let us consider the word equation $u_1 \dots u_m = v_1 \dots v_n$, possibly with $m \neq n$. Any solution to this equation determines uniquely a way of putting $n - 1$ bars inside the $m$ spaces corresponding to $u_1, \dots, u_m$ and also a way of putting $m - 1$ bars inside the $n$ spaces corresponding to $v_1, \dots, v_n$. This is true even if some of the $u_i$ or $v_j$ are the empty word. Indeed if $u_i = 1$ or $v_j = 1$ then no bar must be contained in $u_i$ or $v_j$. 
	
We observe that a solution to the equation $u_1 \dots u_m = v_1 \dots v_n$ determines also a \textit{weak composition}\footnote{a weak composition for an integer is a composition when 0's are allowed} for $n - 1$ in $m$ parts and for $m - 1$ in $n$ parts.
	
We give the following as an example for $m = 4$ and $n = 3$:
	
\medskip

\hspace{3.5 cm}
\begin{tabular}{|c|c|c|c|}
	\hline 
	$u_1$ & $u_2$ &	$u_3$ & $\,\, u_4 \,\,$ \\
	\hline
\end{tabular}
	
\hspace{3.5 cm}
\begin{tabular}{|c|c|c|}
	\hline 
	$\,\, v_1 \,\,$ & $\,\,\,\,\,\,\, v_2 \,\,\,\,\,\,\,$ & $v_3$\\
	\hline
\end{tabular}

\medskip
	
\noindent Here we can say that there exist words $a, b, c$ such that $v_1 = u_1 a$, $u_2 = a b$, $v_2 = b u_3 c$ and $u_4 = c v_3$. This solution determines the weak compositions $(0, 1, 0, 1)$ for 2 and $(1, 2, 0)$ for 3, which are obtained by counting the number of bars inside each $u_i$ and each $v_j$ respectively.
\end{remark}

\begin{lemma} \label{shirv} Let $u$ and $v$ be reduced words such that $u \neq v^{-1}$. Then one of the following holds:
	\begin{enumerate} [1)]
		\item there exist words $u_1, a, s$ such that $u = u_1 a$, $v = a^{-1} s (u*v) s^{-1} u_1^{-1}$ and $\rho(uv) = u_1 s (u*v) s^{-1} u_1^{-1}$;
		
		\item there exist non-empty words $c_1, c_2$ and words $t, a$ such that $u*v = c_1 c_2$, $u = t c_1 a$, $v = a^{-1} c_2 t^{-1}$, $\rho(uv) = t c_1 c_2 t^{-1}$, $\rho(v u) = a^{-1} c_2 c_1 a$ and $v*u = c_2 c_1$; 
		
		\item there exist words $v_1, s, a$ such that $u = v_1^{-1} s (u*v) s^{-1} a$, $v = a^{-1} v_1$ and $\rho(uv) = v_1^{-1} s (u*v) s^{-1} v_1$. 
	\end{enumerate}
\end{lemma}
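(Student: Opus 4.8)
The plan is to peel off the two layers of cancellation in $uv$ one at a time — first the ordinary free‑group cancellation that produces $\rho(uv)$, then the cyclic cancellation that produces $u*v$ — and then to locate the seam between $u$ and $v$ relative to the cyclically reduced core. First I would record the standard cancellation decomposition: since $u$ and $v$ are reduced, there are factorizations $u = u_1 a$ and $v = a^{-1} v_1$ with $u_1 v_1$ reduced, so that $\rho(uv) = u_1 v_1$; this follows by iterating Levi's Lemma (Remark \ref{LeviLemma}) at the seam, or directly by taking $a$ to be the longest suffix of $u$ whose inverse is a prefix of $v$. Because $u \neq v^{-1}$ we get $\rho(uv) = u_1 v_1 \neq 1$. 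Next I write $\rho(uv) = t\,c\,t^{-1}$ as a reduced factorization with $c = \hat{\rho}(uv) = u*v$ cyclically reduced; here $c \neq 1$, since a reduced $t\,c\,t^{-1}$ with $c = 1$ would force $t = 1$ and hence $\rho(uv) = 1$.

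The core step is to compare the two factorizations $u_1 v_1 = t\,c\,t^{-1}$ of the single word $\rho(uv)$ via the multi‑factor form of Levi's Lemma (Remark \ref{LeviLemma}): the bar separating $u_1$ from $v_1$ must fall inside one of the three blocks $t$, $c$, $t^{-1}$, with a boundary position assigned to the block on the appropriate side. If it lies in $t$, then $t = u_1 s$ and $v_1 = s\,c\,s^{-1} u_1^{-1}$, and substituting back yields case 1. If it lies in $t^{-1}$, then symmetrically $t = v_1^{-1} s$ and $u_1 = v_1^{-1} s\,c\,s^{-1}$, yielding case 3. If it lies strictly inside $c$, write $c = c_1 c_2$ with $c_1, c_2 \neq 1$, $u_1 = t c_1$ and $v_1 = c_2 t^{-1}$; substituting then gives the first four asserted equalities of case 2.

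To finish case 2 I still have to identify $\rho(vu)$ and $v*u$. Cancelling $t^{-1} t$ in $vu = a^{-1} c_2\, t^{-1} t\, c_1 a$ leaves $a^{-1} c_2 c_1 a$, and $c_2 c_1$ is a cyclic permutation of the cyclically reduced word $c$, hence is itself cyclically reduced (all its cyclic permutations are reduced, by definition of $\hat{\mathcal{F}}(X)$). Since $a^{-1} c_2$ is a prefix subword of the reduced word $v = a^{-1} c_2 t^{-1}$ and $c_1 a$ a suffix subword of the reduced word $u = t c_1 a$, two applications of Remark \ref{uvw} (with $c_2 c_1 \neq 1$) show $a^{-1} c_2 c_1 a$ is reduced, so $\rho(vu) = a^{-1} c_2 c_1 a$ and $v*u = \hat{\rho}(vu) = c_2 c_1$.

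I expect the main obstacle to be bookkeeping rather than any real difficulty: making the three positions of the seam genuinely exhaustive, in particular routing the degenerate situations $u_1 = t$, $v_1 = t^{-1}$, $t = 1$, $a = 1$ cleanly into one of the three stated cases (e.g.\ $u_1 = t$ goes to case 1 with $s = 1$, and one checks $u_1 = t$ together with $v_1 = t^{-1}$ is impossible because $c \neq 1$), and being careful that every concatenation the statement asserts to equal some $\rho(\cdot)$ really is reduced — which in each instance reduces to the reducedness of $t\,c\,t^{-1}$ and of sub‑factorizations of the reduced words $u$ and $v$.
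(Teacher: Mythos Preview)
Your argument is correct and complete: the two-stage decomposition $u=u_1a$, $v=a^{-1}v_1$ followed by $u_1v_1=t\,c\,t^{-1}$, and then the Levi-Lemma trichotomy on the position of the seam relative to $t$, $c$, $t^{-1}$, yields exactly the three cases of the lemma, and your verification of the extra equalities $\rho(vu)=a^{-1}c_2c_1a$ and $v*u=c_2c_1$ in case~2 via Remark~\ref{uvw} is sound.

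There is nothing to compare against here: the paper does not prove this lemma at all but simply cites it as Lemma~B.2 of \cite{FirstArticle}. Your write-up therefore supplies what the present paper omits; the approach you give is in fact the natural one and almost certainly coincides with the proof in the cited reference.
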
 

\begin{proof} See Lemma B.2 of \cite{FirstArticle}.
\end{proof}

\begin{proposition} \label{puzo} Let $u$ and $v$ be words; then $u*v$ is a cyclic permutation of $v*u$. 
	
Moreover if $u$ and $v$ are reduced and if there exist words $\alpha$, $\beta$, $u'$, $v'$ such that $u = \alpha u' \beta$ and $v = \beta^{-1} v' \alpha^{-1}$ then the words $\beta \beta^{-1}$ and $\alpha^{-1} \alpha$ are canceled when obtaining $u*v$ from $uv$ and when obtaining $v*u$ from $vu$.
\end{proposition}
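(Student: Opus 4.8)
The plan is to prove the two assertions separately; for the first I would reduce to reduced words and exploit that $vu$ is literally a cyclic permutation of $uv$, and for the second I would exhibit an explicit sequence of elementary cancellations. Starting with the first assertion: by Proposition \ref{summar}(\ref{u*vNonRed}) I may assume $u$ and $v$ reduced, since passing to $\rho(u)$ and $\rho(v)$ changes neither $u*v$ nor $v*u$. Now $vu$ is a cyclic permutation of the concatenation $uv$ (cut $uv$ at the seam between $u$ and $v$), so by Proposition \ref{summar}(\ref{permCon2}) the reduced word $\rho(vu)$ is the reduced form of a conjugate of $uv$; in particular $\rho(uv)$ and $\rho(vu)$ are conjugate in $\mathcal{F}(X)$. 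Since each of them is conjugate to its own cyclically reduced form, $u*v=\hat\rho(uv)$ and $v*u=\hat\rho(vu)$ are conjugate cyclically reduced words, hence cyclic permutations of one another. (Alternatively one runs this through Lemma \ref{shirv}: in case 2 it gives $u*v=c_1c_2$ and $v*u=c_2c_1$ outright, while in cases 1 and 3 it gives $\rho(uv)=t(u*v)t^{-1}$ as a reduced word, whence a one-line computation in $\mathcal{F}(X)$ using the displayed factorizations yields $\rho(vu)=t'(u*v)(t')^{-1}$ as well, the intervening cancellation being $u_1^{-1}u_1$, respectively $v_1v_1^{-1}$.) The only non-formal ingredient is the classical fact that two cyclically reduced elements of a free group are conjugate if and only if they are cyclic permutations of one another; a self-contained argument follows by reducing $\gamma c\gamma^{-1}$ for $c$ cyclically reduced and noting that the cyclically reduced core stays a cyclic permutation of $c$ at every step of the reduction.

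For the second assertion, write $uv=\alpha u'\beta\,\beta^{-1}v'\alpha^{-1}$ as a factorization. Because $\beta$ is a suffix of $u$ and $\beta^{-1}$ a prefix of $v$, the two halves of $\beta\beta^{-1}$ are genuinely adjacent in $uv$, so, innermost pair first, the whole of $\beta\beta^{-1}$ can be cancelled, producing $\alpha u'v'\alpha^{-1}$; this leaves $\rho(uv)$, hence $u*v$, unchanged. The word $\alpha u'v'\alpha^{-1}$ has $\alpha$ as prefix and $\alpha^{-1}$ as suffix, so its first and last letters are mutually inverse, and, iterating this, the cyclic cancellation of the suffix $\alpha^{-1}$ against the prefix $\alpha$ --- that is, of $\alpha^{-1}\alpha$ --- is available and cyclically reduces $\alpha u'v'\alpha^{-1}$ to $u'v'$. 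Continuing to reduce $u'v'$ linearly and cyclically reaches a cyclically reduced word which, being obtained from $uv$ by elementary cancellations, is a cyclic permutation of $\hat\rho(uv)=u*v$, in accordance with the first assertion. Hence $\beta\beta^{-1}$ and $\alpha^{-1}\alpha$ occur among the cancellations carried out in passing from $uv$ to $u*v$. The statement for $vu$ is the mirror image, since $vu=\beta^{-1}v'\alpha^{-1}\,\alpha u'\beta$, with $\alpha^{-1}\alpha$ now the seam cancellation and $\beta\beta^{-1}$ the cyclic one; the same argument applies.

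I expect the main obstacle to be the bookkeeping in the second assertion: one must make precise what ``the cancellations carried out in passing from $uv$ to $u*v$'' means --- a sequence of elementary cancellations, the linear ones producing $\rho(uv)$ and the cyclic ones (prefix against suffix) producing a cyclically reduced word --- and then check both that the cyclic cancellation of $\alpha^{-1}\alpha$ is still legitimately available once $\beta\beta^{-1}$ has been cancelled, and that the end result is independent of the order in which the cancellations are performed (confluence of free reduction, together with invariance of the cyclically reduced form up to cyclic permutation). Once that framework is fixed the argument above is routine; in the first assertion the only comparable subtlety is the conjugacy fact for cyclically reduced words, which is standard.
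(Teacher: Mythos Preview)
The paper does not prove this proposition here; it simply cites Theorem~4.1 of \cite{FirstArticle}. So there is no in-paper argument to compare yours against, and your write-up is effectively supplying a proof the present paper omits.

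Your argument for the first assertion is correct and is in fact the natural one: $vu$ is a cyclic permutation of $uv$, hence conjugate to it, hence $\hat\rho(vu)$ and $\hat\rho(uv)$ are conjugate cyclically reduced words, hence cyclic permutations of one another. Note that this is exactly the content of Corollary~\ref{cycRedPerm}, which in the paper is placed \emph{after} Proposition~\ref{puzo} and is also proved by citation; your direct route via the standard conjugacy criterion for cyclically reduced words avoids any circularity. The alternative through Lemma~\ref{shirv} also works, though your one-line description of cases~1 and~3 glosses over a small point (after the $u_1^{-1}u_1$ or $v_1v_1^{-1}$ cancellation the resulting word need not be literally reduced, only conjugate to $u*v$, which is all you need).

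For the second assertion your reading is the right one: the claim is that \emph{some} legitimate sequence of linear and cyclic elementary cancellations taking $uv$ to a cyclic permutation of $u*v$ contains the cancellation of the seam block $\beta\beta^{-1}$ and the cyclic block $\alpha^{-1}\alpha$. Your sequence---cancel $\beta\beta^{-1}$ at the seam, then peel off $\alpha$ against $\alpha^{-1}$ cyclically, then finish reducing $u'v'$---does exactly this, and the confluence issue you flag (that the endpoint is well-defined only up to cyclic permutation) is precisely the subtlety, which you handle by noting that every such cancellation preserves the conjugacy class, so any terminal cyclically reduced word is a cyclic permutation of $\hat\rho(uv)$. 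That is a complete argument.
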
 

\begin{proof} See Theorem 4.1 of \cite{FirstArticle}.
\end{proof}

\begin{corollary} \label{cycRedPerm} Let $w$ be a word and let $w'$ be a cyclic permutation of $w$. Then $\hat{\rho}(w')$ is a cyclic permutation of $\hat{\rho}(w)$. \end{corollary}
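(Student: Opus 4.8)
The plan is to reduce the statement directly to Proposition \ref{puzo}. First I would unwind the hypothesis: since $w'$ is a cyclic permutation of $w$, there exist words $w_1, w_2$ with $w = w_1 w_2$ and $w' = w_2 w_1$. (Strictly, the relation ``$\exists\, w_1,w_2 \colon w = w_1w_2,\ w' = w_2w_1$'' is already reflexive, symmetric and transitive — a rotation of a rotation of $w$ is again a rotation of $w$ — so it coincides with the equivalence $\sim$; alternatively one passes between the two descriptions via Proposition \ref{summar}(\ref{remCycPerm}). This is the only point where one must check that the notion of cyclic permutation in the hypothesis matches the one occurring in the conclusion of Proposition \ref{puzo}.)

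Next I would re-express both cyclically reduced forms as cyclically reduced products. By the very definition of $*$, namely $u*v := \hat{\rho}(uv)$, we have $\hat{\rho}(w) = \hat{\rho}(w_1 w_2) = w_1 * w_2$ and $\hat{\rho}(w') = \hat{\rho}(w_2 w_1) = w_2 * w_1$.

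Finally, Proposition \ref{puzo} applied with $u := w_1$ and $v := w_2$ says precisely that $w_2 * w_1$ is a cyclic permutation of $w_1 * w_2$, i.e.\ that $\hat{\rho}(w')$ is a cyclic permutation of $\hat{\rho}(w)$, which is the claim. I do not expect any genuine obstacle here: the argument is essentially a substitution, and the only mild care needed is the bookkeeping about the definition of ``cyclic permutation'' mentioned above (and the degenerate cases $w_1 = 1$ or $w_2 = 1$, where the conclusion is immediate). This is why the statement is phrased as a corollary of Proposition \ref{puzo} rather than proved from scratch.
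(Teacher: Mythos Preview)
Your proposal is correct and is exactly the intended derivation: the paper lists this statement as a corollary of Proposition \ref{puzo} (citing Corollary 4.3 of \cite{FirstArticle}), and the argument there is precisely the one you give---write $w = w_1 w_2$, $w' = w_2 w_1$, identify $\hat{\rho}(w) = w_1 * w_2$ and $\hat{\rho}(w') = w_2 * w_1$, and invoke Proposition \ref{puzo}.
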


\begin{proof} See Corollary 4.3 of \cite{FirstArticle}. \end{proof}

\begin{corollary} \label{permCycRedForm} If $t, w$ are words then $\hat{\rho}(t w t^{-1})$ is a cyclic permutation of $\hat{\rho}(w)$. If moreover $\rho(t)\rho(w)\rho(t)^{-1}$ is reduced then $\hat{\rho}(t w t^{-1}) =\hat{\rho}(w)$. \end{corollary}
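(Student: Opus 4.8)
The plan is to obtain both assertions from Proposition~\ref{puzo} together with the fact that $\hat\rho$ depends on its argument only through the reduced form. For the first assertion I would write the word $twt^{-1}$ as the concatenation $t\,(wt^{-1})$ and invoke Proposition~\ref{puzo} with $u:=t$ and $v:=wt^{-1}$: this says that $\hat\rho(twt^{-1})=t*(wt^{-1})$ is a cyclic permutation of $(wt^{-1})*t=\hat\rho(wt^{-1}t)$. Since $\rho(wt^{-1}t)=\rho(w)$ and $\hat\rho$ is determined by the reduced form of its argument, $\hat\rho(wt^{-1}t)=\hat\rho(w)$, which gives the first claim.

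For the second assertion, suppose $\rho(t)\rho(w)\rho(t)^{-1}$ is reduced, so that it coincides with $\rho(twt^{-1})$. Let $\rho(w)=s\,\hat\rho(w)\,s^{-1}$ be the unique factorization with $s\in\mathcal F(X)$ provided by the definition of $\hat\rho$. Substituting this into $\rho(t)\rho(w)\rho(t)^{-1}$ and regrouping the concatenated pieces, I would write $\rho(twt^{-1})=\bigl(\rho(t)s\bigr)\,\hat\rho(w)\,\bigl(\rho(t)s\bigr)^{-1}$. This word is reduced (being equal to $\rho(twt^{-1})$); hence $\rho(t)s$, as a prefix of a reduced word, lies in $\mathcal F(X)$, while $\hat\rho(w)\in\hat{\mathcal F}(X)$. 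Applying the uniqueness in the definition of the cyclically reduced form to $\rho(twt^{-1})$ then forces $\hat\rho(twt^{-1})=\hat\rho(w)$.

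I do not expect a genuine obstacle. The only point deserving a word of care in the second part is the regrouping step: substituting $\rho(w)=s\,\hat\rho(w)\,s^{-1}$ into the reduced word $\rho(t)\rho(w)\rho(t)^{-1}$ and then reading it as $\bigl(\rho(t)s\bigr)\,\hat\rho(w)\,\bigl(s^{-1}\rho(t)^{-1}\bigr)$ changes neither the word nor its reducedness, so the uniqueness clause applies directly. Everything else reduces to a single application of Proposition~\ref{puzo} and to unwinding the definitions of $\rho$ and $\hat\rho$.
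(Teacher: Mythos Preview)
Your argument is correct. The paper itself does not give a proof here but simply refers to Corollary~4.4 of \cite{FirstArticle}, so there is nothing in this paper to compare against directly. Your first part is exactly the natural derivation from Proposition~\ref{puzo} (equivalently, one could cite Corollary~\ref{cycRedPerm} applied to the cyclic permutation $wt^{-1}t$ of $twt^{-1}$, which amounts to the same thing), and your second part correctly invokes the uniqueness of the decomposition $\rho(v)=tct^{-1}$ with $c$ cyclically reduced.
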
 

\begin{proof} See Corollary 4.4 of \cite{FirstArticle}. \end{proof}

\section{The main result}  \label{sec2}

This section uses the results of Appendix \ref{secB}.


Since $\mathcal{F}(X)$ is a group we know that given $u, w \in \mathcal{F}(X)$ we have that 
	\begin{equation} \label{assocFG1} u \cdot (u^{-1} \cdot w) = (u \cdot u^{-1}) \cdot w = w\end{equation}
and
	\begin{equation} \label{assocFG2} (w \cdot u^{-1}) \cdot u = w \cdot (u^{-1} \cdot u) = w\end{equation}
This is not true in general for the cyclically reduced product $*$ if $X$ has at least two elements. Indeed let $x, y \in X$ with $x \neq y$ and let $u := x y^{-1} x y^2$, $w := x^2 y^{-1} x^{-1} y^2$. Then 
		\begin{equation} \label{assocCFG1} u * (u^{-1} * w) = y^{-1} x y^2 x^{-1} y x y^{-1} \neq w\end{equation}
and
		\begin{equation} \label{assocCFG2} (w * u^{-1}) * u = x y^{-1} x^{-2} y x y^{-1} x y^2 \neq w.\end{equation} 
However let $u' := y^2 x y^{-1} x$ and $u'' := y^{-1} x y^2 x$. Then
		\begin{equation} \label{assocCFG3} u' * (u^{-1} * w) = x y^{-1} x^{-1} y^2 x \sim w\end{equation}
and
		\begin{equation} \label{assocCFG4} (w * u^{-1}) * u'' = y^2 x^2 y^{-1} x^{-1} \sim w,\end{equation}
that is if we replace $u$ in the expressions $(\ref{assocCFG1})$ and $(\ref{assocCFG2})$ with some of its cyclic permutations the result is a word which is a cyclic permutation of $w$.\footnote{We observe that $u'$ and $u''$ are the only cyclic permutations of $u$ verifying these properties. However if we take $u = x y^{-1} x y$ and $w = x^2 y^{-1} x^{-1} y$ then we can take as $u'$ the two different cyclic permutations $x y x y^{-1}$ and $y x y^{-1} x$, but as $u''$ we can only take $x y^{-1} x^{-1} y x$.}

We can then conjecture that this situation is general, but it is easy to find a counterexample. Indeed let $w$ be cyclically reduced and let $w = u h f h^{-1}$ for some cyclically reduced word $f$ with $h \neq 1$. Then $u^{-1} * w = f$ and since any cyclic permutation of $u$ has the same length of $u$ then for any cyclic permutation $u'$ of $u$ we have that 
		$$|u' * (u^{-1} * w)| \leq |u| + |f| < |u| + |f| + 2|h| = |w|,$$
so $u * (u^{-1} * w)$ cannot be a cyclic permutation of $w$. We observe however that the last case is special in that $w$ is obtained as a concatenation of $u$ with $h f h^{-1}$ and this concatenation is a cyclically reduced word.

\smallskip

The next theorem proves that the two situations illustrated above are general, i.e., either the first or the second situation hold for any $u$ and $w$. Moreover we will show that we do not need to assume that $u$ and $w$ be cyclically reduced in order for these results to be true.

The analogy of the cyclically reduced product with (\ref{assocFG1}) and (\ref{assocFG2}) is deeper than that. Indeed the identities among relations following from (\ref{assocFG1}) and (\ref{assocFG2}) are $u \centerdot u^{-1} \centerdot w \equiv w$ and $w \centerdot u^{-1} \centerdot u \equiv w$. These identities are strictly basic (Definitions \ref{defBasIAR} and \ref{defBasIAR2}). Let us see if this is true also for (\ref{assocCFG3}) and (\ref{assocCFG4}).

We have that $u' = \rho(y^2 u y^{-2})$, $\rho(u^{-1} w) = y^{-2} x^{-1} y x y^{-1} x^{-1} y^2$, $u^{-1} * w = x^{-1} y x y^{-1} x^{-1} = \rho(y^2 u^{-1} w y^{-2})$, $\rho(u' (u^{-1} * w)) = y^2 x^2 y^{-1} x^{-1} = \rho(y^2 w y^{-2})$, and $\rho(u' (u^{-1} * w)) = u' * (u^{-1} * w)$, so the identity among relations following from (\ref{assocCFG3}) is
	$$y^2 u y^{-2} \centerdot y^2 u^{-1} y^{-2} \centerdot y^2 w y^{-2} \equiv y^2 w y^{-2},$$
which is strictly basic.

In the same way (verification left to the reader) we can prove that the identity among relations following from (\ref{assocCFG4}) is
	$$x^{-1} u x \centerdot x^{-1} u^{-1} x \centerdot x^{-1} w x \equiv x^{-1} w x,$$
again it is strictly basic.

We will prove in the next theorem that this fact too is true for the cyclically reduced product.

\begin{theorem} \label{mainTheor1} Let $u$ and $w$ be words. Then one of the following two cases hold: 
	\begin{enumerate}
		\item there exist words $u', u''$ which are cyclic permutations of $u$ such that
			\begin{equation} \label{MT1crA1} \hat{\rho}(w) \sim u' * (u^{-1} * w) \end{equation} 
		and 	
			\begin{equation} \label{MT1crA2} \hat{\rho}(w) \sim (w * u^{-1}) * u'';\end{equation}

		\item there exist a word $u'$ which is the reduced form of a cyclic permutation of $u$ and a non-empty word $h$ such that
			\begin{equation} \label{MT1crB1} \hat{\rho}(w) \sim u' h (u^{-1}*w) h^{-1}\end{equation}
		and moreover $w * u^{-1} = u^{-1} * w$. In particular this implies that	
			\begin{equation} \label{MT1crB2} \hat{\rho}(w) \sim h (w*u^{-1}) h^{-1} u'. \end{equation}
	\end{enumerate}
Finally the identities among relations involving $u, w, u^{-1}, w^{-1}$ that by Remark \ref{idFrom} follow from (\ref{MT1crA1})-(\ref{MT1crB2}) are strictly basic. 
\end{theorem}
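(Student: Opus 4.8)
The plan is to reduce everything to the structure lemma for the cyclically reduced product, namely Lemma~\ref{shirv}, applied to the pair $\rho(u^{-1})$ and $\rho(w)$; by Proposition~\ref{summar}\eqref{u*vNonRed} we may freely replace $u$ and $w$ by their reduced forms, so assume from the outset that $u,w$ are reduced. If $\rho(u^{-1}) = \rho(w)^{-1}$, i.e.\ $w = u^{-1}$ as reduced words, the statement is trivial (take $u'=u''=u$, $h$ absent, both sides $=1$), so assume $u^{-1} \neq w^{-1}$ and apply Lemma~\ref{shirv} with the roles ``$u$''$\,=\rho(u^{-1})$, ``$v$''$\,=\rho(w)$. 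This yields three cases. The idea is that cases 1) and 3) of Lemma~\ref{shirv} — where no ``new'' cyclically reduced block $c_1c_2$ is created beyond a conjugating prefix — will land us in case~1 of the theorem, while case 2) of Lemma~\ref{shirv}, where a genuinely new two-sided cancellation pattern $u = tc_1a$, $v = a^{-1}c_2t^{-1}$ appears, is the one that produces the non-empty $h$ of case~2 of the theorem.

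First I would analyze case 1) of Lemma~\ref{shirv}: there $\rho(u^{-1}) = u_1 a$ and $\rho(w) = a^{-1}s\,(u^{-1}*w)\,s^{-1}u_1^{-1}$. Then $u = \rho(a^{-1}u_1^{-1})$ with $a^{-1}u_1^{-1}$ already reduced (it is the reverse-inverse of $u_1a$), so the cyclic permutation $u' := u_1^{-1}a^{-1}$ of $u$ is reduced, and one computes $u'(u^{-1}*w)$; I would use Remark~\ref{uvw} and Corollary~\ref{permCycRedForm} to show $u'*(u^{-1}*w) = \hat{\rho}\big(s^{-1}u_1^{-1}\cdot a^{-1}s\cdot(u^{-1}*w)\big)$ is a cyclic permutation of $\hat\rho(w)$, because $w$ is conjugate (by $s^{-1}u_1^{-1}$) to exactly that word. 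Case 3) of Lemma~\ref{shirv} is handled symmetrically and, via Proposition~\ref{summar}\eqref{revCRP}--\eqref{revCycRedForm} (reversion) applied to the already-settled case 1), gives \eqref{MT1crA2}; alternatively one runs the same direct argument on the right. Combining a case-1 outcome on one side with its reversion on the other gives both \eqref{MT1crA1} and \eqref{MT1crA2}, so we are in case~1 of the theorem. In case 2) of Lemma~\ref{shirv}, $\rho(u^{-1}) = tc_1a$ and $\rho(w) = a^{-1}c_2t^{-1}$ with $c_1,c_2$ non-empty; then $u^{-1}*w = c_1c_2 = w*u^{-1}$ (the last equality is the content of the $\rho(vu)$ line of Lemma~\ref{shirv}), which gives the clause $w*u^{-1}=u^{-1}*w$ of the theorem; taking $h := tc_1$ (non-empty since $c_1 \neq 1$) and $u' := \rho(a^{-1}c_1^{-1}t^{-1})$ a reduced form of a cyclic permutation of $u$, one checks that $u'h(u^{-1}*w)h^{-1}$ equals, up to cyclic permutation, the reduced form of $tc_1\cdot a^{-1}c_2t^{-1}\cdot (tc_1)^{-1}$-conjugate data, which is $\hat\rho(w)$ by the same conjugation argument; \eqref{MT1crB2} then follows from \eqref{MT1crB1} by Proposition~\ref{puzo}, since $u'\cdot h(u^{-1}*w)h^{-1}$ and $h(u^{-1}*w)h^{-1}\cdot u'$ are cyclic permutations of each other.

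For the final assertion about identities among relations, the plan is to track, in each of the above sub-cases, the explicit conjugating word $\alpha$ such that $\hat\rho(w) = \rho(\alpha w \alpha^{-1})$ and such that the intermediate words $u'*(u^{-1}*w)$, resp.\ $u'h(u^{-1}*w)h^{-1}$, are literally $\rho(\alpha u \alpha^{-1})\cdot\rho(\alpha u^{-1}\alpha^{-1})\cdot\rho(\alpha w\alpha^{-1})$ up to cyclic permutation (the model computation is exactly the one carried out before the theorem statement with $\alpha = y^2$ and with $\alpha = x^{-1}$). Then the identity among relations that Remark~\ref{idFrom} attaches to each of \eqref{MT1crA1}--\eqref{MT1crB2} is of the form $g u g^{-1}\centerdot g u^{-1}g^{-1}\centerdot g w g^{-1}\equiv g w g^{-1}$ (or its mirror), which is strictly basic by Definitions~\ref{defBasIAR}--\ref{defBasIAR2}; I would cite a lemma from Appendix~\ref{secA} saying that conjugating all relators of a strictly basic identity by a common word keeps it strictly basic, and that cyclic permutation of the conclusion does likewise.

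The main obstacle I expect is bookkeeping rather than conceptual: in each branch of Lemma~\ref{shirv} one must verify that the concatenations claimed to be reduced (e.g.\ $u'h(u^{-1}*w)h^{-1}$ in case~2, or the product $u'(u^{-1}*w)$ in case~1) genuinely have no cancellation beyond what is being cyclically reduced, so that the ``conjugate-and-reduce'' identification is valid; this requires carefully combining the disjointness-of-cancellation statements in Lemma~\ref{shirv} and Proposition~\ref{puzo} with Remark~\ref{uvw}, and is presumably where the technical results of Appendix~\ref{secB} are invoked. A secondary subtlety is matching up which cyclic permutation of $u$ to call $u'$ versus $u''$ so that \eqref{MT1crA1} and \eqref{MT1crA2} hold simultaneously; the reversion symmetry of Proposition~\ref{summar}\eqref{revCRP}--\eqref{revCycRedForm} should make the left/right passage mechanical, but one must confirm it sends case~1 of Lemma~\ref{shirv} to case~3 and fixes case~2, and that it interacts correctly with $\hat\rho$.
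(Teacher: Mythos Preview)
Your overall architecture—reduce to reduced words, apply Lemma~\ref{shirv} to $(u^{-1},w)$, and use reversion to pass from \eqref{MT1crA1} to \eqref{MT1crA2}—matches the paper's. But the assignment of cases is inverted, and this is not merely a labeling issue: it breaks the argument.

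In case~2) of Lemma~\ref{shirv} you obtain $u^{-1}*w=c_1c_2$ and $w*u^{-1}=c_2c_1$; these are cyclic permutations of one another, \emph{not} equal, so the clause $w*u^{-1}=u^{-1}*w$ of the theorem cannot be extracted here. Your proposed word $u'h(u^{-1}*w)h^{-1}$ with $h=tc_1$ and $u'=\rho(a^{-1}c_1^{-1}t^{-1})$ has length $|a|+2|c_1|+|c_2|+|t|$, which exceeds $|w|=|a|+|c_2|+|t|$ by $2|c_1|>0$, so it cannot be a cyclic permutation of $\hat\rho(w)$. In fact the paper shows (via Lemma~\ref{baseLemma}) that cases~2) and~3) of Lemma~\ref{shirv} \emph{always} yield case~1 of the theorem: taking $u'=t^{-1}a^{-1}f_1^{-1}$ one has $\rho(u'f)=t^{-1}a^{-1}f_2$, a cyclic permutation of $w=a^{-1}f_2t^{-1}$, so $u'*f\sim\hat\rho(w)$ with no $h$ needed.

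Conversely, your handling of case~1) of Lemma~\ref{shirv} is incomplete. There $w=a^{-1}s\,f\,s^{-1}u_1^{-1}$ with $s$ possibly non-trivial, so the relevant cyclic permutation of $w$ is $u'\,s\,f\,s^{-1}$ with $u'=u_1^{-1}a^{-1}$. When $s\neq 1$ the word $s f s^{-1}$ sits between $u'$ and nothing else, and $\hat\rho(u'\,s f s^{-1})$ is \emph{not} in general $u'*f$: the conjugator $s$ may survive partially as a non-empty $h$. The example $w=uhfh^{-1}$ given before the theorem is precisely an instance of case~1) of Lemma~\ref{shirv} (with $u_1^{-1}=u^{-1}$, $a=1$, $s=h$), and there case~2 of the theorem is forced. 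This is exactly what Proposition~\ref{tecLemma2c} of Appendix~\ref{secB} is for: it takes a reduced word of shape $u_1tft^{-1}u_2$ and decides whether $\hat\rho$ of it is $\sim u_0*f$ or $\sim u_0hfh^{-1}$ for some non-empty $h$. The paper then checks separately that when the second alternative occurs one indeed has $t\neq 1$ and $w*u^{-1}=f=u^{-1}*w$. Your plan does not contain a mechanism for this dichotomy, and the ``bookkeeping'' you flag as the main obstacle is actually where the substance lies.
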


\begin{proof} We observe that we can assume that $u$ and $w$ are reduced in view of (\ref{cpCanc}) and (\ref{u*vNonRed}) of Proposition \ref{summar}.

First we prove that if the identities among relations involving $u, w$ that by Remark \ref{idFrom} follow from (\ref{MT1crA1})-(\ref{MT1crB2}) are basic then they are strictly basic.

Indeed there exist words $\alpha, \beta, \gamma$ such that $u' = \rho(\alpha u \alpha^{-1})$, $u^{-1} * w = \rho(\beta u^{-1} w \beta^{-1})$ and $u' * (u^{-1} * w) = \rho(\gamma u'(u^{-1} * w) \gamma^{-1})$. This implies that $u' * (u^{-1} * w) = \rho(\gamma \alpha u \alpha^{-1} \beta u^{-1} w \beta^{-1} \gamma^{-1})$.

Let (\ref{MT1crA1}) hold; then there exists a word $\delta$ such that $u' * (u^{-1} * w) = \rho(\delta w \delta^{-1})$, thus the identity among relations following from (\ref{MT1crA1}) is 
		$$(\gamma \alpha) u (\alpha^{-1} \gamma^{-1}) \centerdot (\gamma \beta) u^{-1} (\beta^{-1} \gamma^{-1}) \centerdot (\gamma \beta) w (\beta^{-1} \gamma^{-1}) = \delta w \delta^{-1}.$$
If this identity is basic then $\rho(\gamma \alpha) = \rho(\gamma \beta)$ and $\rho(\gamma \beta) = \delta$, thus we have also that $\rho(\gamma \alpha) = \delta$ and the identity is strictly basic.

If (\ref{MT1crA2}) holds the proof is analogous and we omit it. 

Let (\ref{MT1crB1}) hold. Then $u' h (u^{-1}*w) h^{-1} = \rho(\alpha u \alpha^{-1} h \beta u^{-1} w \beta^{-1} h^{-1})$. Moreover there exists a word $\epsilon$ such that $u' h (u^{-1}*w) h^{-1} = \rho(\epsilon w \epsilon^{-1})$, thus the identity among relations following from (\ref{MT1crA1}) is 
		$$\alpha u \alpha^{-1} \centerdot (h \beta) u^{-1} (\beta^{-1} h^{-1}) \centerdot (h \beta) w (\beta^{-1} h^{-1}) = \epsilon w \epsilon^{-1}.$$
If this identity is basic then $\alpha = \rho(h \beta)$ and $\rho(h \beta) = \epsilon$, thus we have also that $\alpha = \epsilon$ and the identity is strictly basic.

The proof for (\ref{MT1crB2}) is analogous.

\smallskip

Now we show that if (\ref{MT1crA1}) holds then (\ref{MT1crA2}) holds; and if (\ref{MT1crB1}) holds and $w * u^{-1} = u^{-1} * w$ then (\ref{MT1crB2}) holds.

\smallskip

Indeed let (\ref{MT1crA1}) hold; then by applying (\ref{MT1crA1}) to $\underline{u}$ and $\underline{w}$ we have that there exists a cyclic permutation $u'$ of $\underline{u}$ such that $\hat{\rho}(\underline{w}) \sim u' * (\underline{u^{-1}} * \underline{w})$ and the identity among relations involving $\underline{u}$ and $\underline{w}$ that follows from this equivalence is basic.

By (\ref{reversesim}) of Proposition \ref{summar} we have that $\underline{\hat{\rho}(\underline{w})} \sim \underline{u' * (\underline{u^{-1}} * \underline{w})}$ and by Remark \ref{idRev} the identity among relations involving $\underline{u}$ and $\underline{w}$ that follows from it is basic. 

By (\ref{revCycRedForm}) of Proposition \ref{summar} we have that $\underline{\hat{\rho}(\underline{w})} = \hat{\rho}(w)$. By (\ref{revCRP}) of Proposition \ref{summar} we have that $\underline{u' * (\underline{u^{-1}} * \underline{w})} = (w * u^{-1}) * \underline{u'}$ and by setting $u'' := \underline{u'}$ we have by (\ref{reversesim}) of Proposition \ref{summar} that $u''$ is a cyclic permutation of $u$. We have thus proved that if (\ref{MT1crA1}) holds then (\ref{MT1crA2}) holds. 

Now let us assume that (\ref{MT1crB1}) holds and that $w * u^{-1} = u^{-1} * w$. Since $u' h (u^{-1}*w) h^{-1} \sim h (u^{-1}*w) h^{-1} u'$, then 
		$$\hat{\rho}(w) \sim h (u^{-1}*w) h^{-1} u' = h (w*u^{-1}) h^{-1} u',$$
proving (\ref{MT1crB2}).

\smallskip

Now we have to prove that either (\ref{MT1crA1}) or (\ref{MT1crB1}) holds and that the identities following from them are basic; moreover we have to prove that if (\ref{MT1crB1}) holds then $w * u^{-1} = u^{-1} * w$.
	
Let us set $f := u^{-1} * w$. We consider the cyclically reduced product of $u^{-1}$ by $w$ and we prove that the claim is true for the three cases of Lemma \ref{shirv}:
	
\begin{enumerate}
	\item there exist words $t, u_1, u_2$ such that $u^{-1} = u_2^{-1} u_1^{-1}$ and $w = u_1 t f t^{-1} u_2$;
		
	\item there exist non-empty words $f_1, f_2$ and words $a, t$ such that $f = f_1 f_2$, $u^{-1} = t f_1 a$ and $w = a^{-1} f_2 t^{-1}$; 
		
	\item there exist words $t, x_1, x_2$ such that $u^{-1} = x_2^{-1} t f t^{-1} x_1^{-1}$ and $w = x_1 x_2$.	
\end{enumerate}
	
\medskip

\textbf{1)} (\ref{MT1crA1}) and (\ref{MT1crB1}) follow from Proposition \ref{tecLemma2c}. By what seen above, also (\ref{MT1crA2}) follows. It remains to prove that if (\ref{MT1crB1}) holds then $u^{-1} * w = w * u^{-1}$.

We have that 
	$$w * u^{-1} = \hat{\rho}(w u^{-1}) = \hat{\rho}(u_1 t f t^{-1} u_2 u_2^{-1} u_1^{-1}) = \hat{\rho}(u_1 t f t^{-1} u_1^{-1}).$$ 

Let $t \neq 1$; then $u_1 t f t^{-1} u_1^{-1}$ is reduced and therefore $w * u^{-1} = f = u^{-1} * w$. Indeed $u_1 t f t^{-1}$ is reduced because it is a subword of $w$; $t^{-1} u_1^{-1}$ is reduced because its inverse is a subword of $w$; finally since $t^{-1} \neq 1$, then $u_1 t f t^{-1} u_1^{-1}$ is reduced by virtue of Remark \ref{uvw}.

Now we prove that if $t = 1$ then (\ref{MT1crA1}) holds. Indeed $w = u_1 f u_2$ and by setting $u' := u_2 u_1$ we have that
	$$u' * (u^{-1} * w) = \hat{\rho}(u' f) = \hat{\rho}(u_2 u_1 f) \sim \hat{\rho}(u_1 f u_2) = \hat{\rho}(w),$$
where the last equivalence follows from Corollary \ref{cycRedPerm}.


\medskip

\textbf{2)} We have that $f_1 = \rho(t^{-1} u^{-1} a^{-1})$ and $f_2 = \rho(a w t)$, so 
	$$f = \rho(t^{-1} u^{-1} a^{-1} a w t) = \rho(t^{-1} u^{-1} w t).$$
We also have that $u = a^{-1} f_1^{-1} t^{-1}$ and the word $u' := t^{-1} a^{-1} f_1^{-1} = \rho(t^{-1} u t)$ is a cyclic permutation of $u$. If we set $\alpha := t^{-1}$ and $\beta := t^{-1}$ then $u' = \rho(\alpha u \alpha^{-1})$ and $f = \rho(\alpha u^{-1} \alpha^{-1} \beta w \beta^{-1})$; thus (\ref{MT1crA1}) and the last part of the claim follow from Lemma \ref{baseLemma}.

\medskip
	
\textbf{3)} We have that $u = x_1 t f^{-1} t^{-1} x_2$ and since $w^{-1} = x_2^{-1} x_1^{-1}$ then
	$$f = \rho(t^{-1} x_2 u^{-1} x_1 t) = \rho(t^{-1} x_2 u^{-1} x_2^{-1} x_1^{-1} w x_1 t).$$ 
	
The word $u' := t^{-1} x_2 x_1 t f^{-1}$ is a cyclic permutation of $u$	and $u' = \rho(t^{-1} x_2 u x_2^{-1} t)$. If we set $\alpha := t^{-1} x_2$ and $\beta := t^{-1} x_1^{-1}$ then $u' = \rho(\alpha u \alpha^{-1})$ and $f = \rho(\alpha u^{-1} \alpha^{-1} \beta w \beta^{-1})$; thus (\ref{MT1crA1}) and the last part of the claim follow from Lemma \ref{baseLemma}.
\end{proof}

\begin{remark} \label{} \rm We show now that the results of Theorem \ref{mainTheor1} cannot be improved.

The example given before Theorem \ref{mainTheor1} shows that in (\ref{MT1crA1}) and (\ref{MT1crA2}) the words $u'$ and $u''$ can be distinct and non-trivial cyclic permutations of $u$. In that example also the cyclic permutations of $w$ are distinct and non-trivial. This shows that (\ref{MT1crA1}) and (\ref{MT1crA2}) cannot be improved.

This is also true when the cyclically reduced product of $u'$ and $u^{-1} * w$ is without cancellation as if $u = x y^2$ and $w = x y x y^3$ (verification left to the reader).

Now let us give an example of $(\ref{MT1crB1})$ and let us also show that $(\ref{MT1crB1})$ cannot be improved. Indeed let $u = x^2 y$ and $w = x y x y x y^{-2} x y$. Then $u^{-1} = y^{-1} x^{-2}$, $u^{-1} * w = x y x y^{-1}$ and since $|w| = 9$, $|u^{-1} * w| = 4$ and $|u| = 3$, then for no cyclic permutation $u'$ of $u$ is $u' * (u^{-1} * w) \sim w$. However let $w' := x y x y x y x y^{-2}$; then $w' \sim w$ and $w' = u' y (u^{-1} * w) y^{-1}$, with $u' = x y x$. Both $w'$ and $u'$ are non-trivial cyclic permutations of $w$ and $u$ respectively.
\end{remark}


\begin{corollary} \label{corToMTh1} Let $u$ and $w$ be words and let us set $f := u^{-1} * w$ and $g := w * u^{-1}$. Then there exist words $u', u'', h$ such that $u'$ and $u''$ are the reduced forms of cyclic permutations of $u$ and such that
	$$\hat{\rho}(w) \sim u' * (h f h^{-1}), \hspace{0.5 cm} \hat{\rho}(w) \sim (h g h^{-1}) * u''.$$
Moreover if $h \neq 1$ then $f = g$, $u' = u''$, $u'*(h f h^{-1}) = u' h f h^{-1}$ and $(h g h^{-1}) * u'' = h g h^{-1} u''$. Finally the identities among relations involving $u, w, u^{-1}, w^{-1}$ that by Remark \ref{idFrom} follow from these equivalences are strictly basic.
\end{corollary}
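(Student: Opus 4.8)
The plan is to deduce the corollary directly from Theorem \ref{mainTheor1} by merging its two cases into the single formulation asked for: one takes the connecting word $h$ to be empty in the first case of the theorem, and equal to the non-empty word supplied by the theorem in the second. As in the proof of Theorem \ref{mainTheor1}, I would first reduce to the situation where $u$ and $w$ are reduced, using (\ref{cpCanc}) and (\ref{u*vNonRed}) of Proposition \ref{summar}; note that this changes neither $f = u^{-1}*w$ nor $g = w*u^{-1}$. I would also record two elementary remarks. First, by (\ref{u*vNonRed}) of Proposition \ref{summar} one has $p*z = \rho(p)*z$ for all words $p,z$; hence whenever Theorem \ref{mainTheor1} produces a cyclic permutation $p$ of $u$, we may replace it by $\rho(p)$, which is the reduced form of a cyclic permutation of $u$, without affecting any of the cyclically reduced products in which it occurs. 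Second, since $\sim$ is an equivalence and a word is cyclically reduced precisely when all its cyclic permutations are reduced, any word that is $\sim$ to a cyclically reduced word is itself cyclically reduced, hence equals its own $\hat\rho$.

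Next I would apply Theorem \ref{mainTheor1} to $u$ and $w$. In its first case I set $h := 1$; then $hfh^{-1}=f$ and $hgh^{-1}=g$, so after replacing the $u',u''$ of the theorem by their reduced forms the equivalences $\hat\rho(w)\sim u'*(hfh^{-1})$ and $\hat\rho(w)\sim(hgh^{-1})*u''$ are exactly (\ref{MT1crA1}) and (\ref{MT1crA2}), and the ``moreover'' clause is vacuously true since $h=1$. In its second case I take $h$ to be the non-empty word of the theorem, keep $u'$ (already the reduced form of a cyclic permutation of $u$), and set $u'':=u'$; since the theorem gives $w*u^{-1}=u^{-1}*w$, i.e.\ $f=g$, the equivalences (\ref{MT1crB1}) and (\ref{MT1crB2}) read $\hat\rho(w)\sim u'hfh^{-1}$ and $\hat\rho(w)\sim hgh^{-1}u'$. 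By the second remark above, $u'hfh^{-1}$ and $hgh^{-1}u'$ are cyclically reduced, hence coincide with $u'*(hfh^{-1})$ and $(hgh^{-1})*u''$ respectively; this yields both displayed equivalences of the corollary as well as all four assertions of its ``moreover'' clause ($f=g$, $u'=u''$, and the two products being plain concatenations).

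For the last sentence, note that in each case the equivalences asserted by the corollary are, literally, the equivalences (\ref{MT1crA1})--(\ref{MT1crB2}) of Theorem \ref{mainTheor1} (with $u',u''$ understood to be reduced forms of cyclic permutations of $u$), so the identities among relations involving $u,w,u^{-1},w^{-1}$ that follow from them by Remark \ref{idFrom} are precisely those already shown strictly basic in Theorem \ref{mainTheor1}. The proof is thus essentially a transcription of the theorem; the only point requiring a little care is the bookkeeping with reduced forms, namely checking that replacing a cyclic permutation of $u$ by its reduced form alters neither the cyclically reduced products nor the attached identities among relations, which is the role of the first remark and of Proposition \ref{summar}.
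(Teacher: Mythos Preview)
Your proposal is correct and follows the same approach as the paper, which simply records that the corollary ``follows trivially from Theorem \ref{mainTheor1}''; you have merely written out the obvious unpacking of the two cases (taking $h=1$ in case~1 and $u''=u'$ in case~2). The only detail you linger on that the paper leaves implicit is the harmless replacement of cyclic permutations by their reduced forms, which indeed changes neither the cyclically reduced products (by Proposition~\ref{summar}(\ref{u*vNonRed})) nor the associated identities among relations.
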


\begin{proof} Follows trivially from Theorem \ref{mainTheor1}. \end{proof}

\appendix

\section{Identities among relations} \label{secA}

This section deals with identities among relations. Some of the material in this section can also be found in Appendix A of \cite{FirstArticle}, but we have included here in order to make the paper self-contained.

\smallskip

Let $a_1, \dots, a_m, r_1, \dots, r_m, b_1, \dots, b_n, s_1, \dots, s_n$ be words such that the equality 
	\begin{equation} \label{idamre0} \rho(a_1 r_1 a_1^{-1} \dots a_m r_m a_m^{-1}) = \rho(b_1 s_1 b_1^{-1} \dots b_n s_n b_n^{-1})\end{equation} 
holds. Then we say that we have an \textit{identity among relations involving $r_1, \dots, r_m, s_1^{-1}, \dots, s_n^{-1}$} and we denote it 
	\begin{equation} \label{idamre2} a_1 r_1 a_1^{-1} \centerdot \dots \centerdot a_m r_m a_m^{-1} \equiv b_1 s_1 b_1^{-1} \centerdot \dots \centerdot b_n s_n b_n^{-1}.	\end{equation} 
If $n = 0$, that is the right hand side is 1, then we say that the identity is in \textit{normal form}.

\begin{remark} \label{iarConjSome} \rm Let us suppose that (\ref{idamre0}) holds, let $i \in \{1, \dots, m\}$ and let $r'_i = \rho(c r_i c^{-1})$ for some word $c$. Then we have an identity among relations involving $r'_i$, all the $r_j$ except $r_i$ and all the $s_j$. That identity is the same as (\ref{idamre2}) except for the coefficient for $r'_i$ that is $\rho(a_i c^{-1})$.
\end{remark}

Identities among relations are special types of word equations. They arise in the context of group presentations, but we will use them without involving an explicit group presentation. In particular an identity among relations involving $r_1, \dots, r_m$ is an identity among relations for any group presentation having $r_1, \dots, r_m$ as relators. The last claim is obvious if the $r_i$ are basic relators. If some of the $r_i$ are non-basic relators, then the claim follows from Remark \ref{iarRepl}.

\begin{remark} \label{iarRepl} \rm Let us suppose that (\ref{idamre2}) holds and that for some $i$ we have that the reduced form of $r_i$ is equal to the reduced form of $c_1 t_1 c_1^{-1} \dots c_k t_k c_k^{-1}$ for some words $c_1, t_1, \dots c_k, t_k$. Then by replacing in (\ref{idamre2}) the term $a_i r_i a_i^{-1}$ with $d_1 t_1 d_1^{-1} \centerdot \dots \centerdot d_k t_k d_k^{-1}$, where $d_j = a_i c_j$, we obtain an identity among relations involving $s_1, \dots, s_n, t_1, \dots, t_k$ and all the $r_h$ except for $h = i$.
\end{remark}

\begin{definition} \label{}  \rm We say that the identities 
	$$a_2 r_2 a_2^{-1} \centerdot \dots \centerdot a_m r_m a_m^{-1} \equiv a_1 r_1^{-1} a_1^{-1} \centerdot b_1 s_1 b_1^{-1} \centerdot \dots \centerdot b_n s_n b_n^{-1},$$
	$$a_1 r_1 a_1^{-1} \centerdot \dots \centerdot a_{m-1} r_{m-1} a_{m-1}^{-1} \equiv b_1 s_1 b_1^{-1} \centerdot \dots \centerdot b_n s_n b_n^{-1} \centerdot a_m r_m a_m^{-1},$$
	$$b_1 s_1^{-1} b_1^{-1} \centerdot a_1 r_1 a_1^{-1} \centerdot \dots \centerdot a_m r_m a_m^{-1} \equiv b_2 s_2 b_2^{-1} \centerdot \dots \centerdot b_n s_n b_n^{-1}$$
and
	$$a_1 r_1 a_1^{-1} \centerdot \dots \centerdot a_m r_m a_m^{-1} \centerdot b_n s_n b_n^{-1} \equiv b_1 s_1 b_1^{-1} \centerdot \dots \centerdot b_{n-1} s_{n-1} b_{n-1}^{-1}$$
are \textit{1-step equivalent to (\ref{idamre2}}). 

We say that an identity $\iota$ is \textit{equivalent} to an identity $\iota'$ if there exist identities $\iota_1, \dots, \iota_n$ such that $\iota$ is 1-step equivalent to $\iota_1$, $\iota_i$ is 1-step equivalent to $\iota_{i+1}$ for $i \in \{1, \dots, n-1\}$ and $\iota_n$ is 1-step equivalent to $\iota'$.
\end{definition}

\begin{remark} \label{difForIds} \rm An identity among relations can be equivalent to more than one identity in normal form; these are called \textit{the normal forms} of that identity. We prove that two normal forms of the same identity are cyclic permutation one of the other.

The proof is by induction on the number of terms in the right hand side of (\ref{idamre2}), where the claim is obvious when that number is 1. Let that number be $k > 1$ and the claim be true when that number is less than $k$. With (\ref{idamre2}) we can associate the following and only these two identities with $k - 1$ terms on the right,
	\begin{equation} \label{cpId1} a_1 r_1 a_1^{-1} \centerdot \dots \centerdot a_m r_m a_m^{-1}  \centerdot b_k s_k b_k^{-1} \equiv b_1 s_1^{-1} b_1^{-1} \centerdot \dots \centerdot b_{k-1} s_{k-1} b_{k-1}^{-1}\end{equation}
and
	\begin{equation} \label{cpId2} b_1 s_1^{-1} b_1^{-1} \centerdot a_1 r_1 a_1^{-1} \centerdot \dots \centerdot a_m r_m a_m^{-1} \equiv b_2 s_2 b_2^{-1} \centerdot \dots \centerdot b_k s_k b_k^{-1}.\end{equation}
Therefore by induction hypothesis the normal forms of the identity (\ref{idamre2}) are two sets of identities such that the elements in each of these sets are cyclic permutations one of the other. These two sets are the cyclic permutations of the identities (\ref{cpId1}) and (\ref{cpId2}).

It remains to prove that any two elements taken one from the first set and the other from the second set are cyclic permutation one of the other. Since being a cyclic permutation is an equivalence relation, it is enough to prove that one specific element of the first set is a cyclic permutation of one specific element of the second set. This is done by taking the following two elements:
 	$$a_1 r_1 a_1^{-1} \centerdot \dots \centerdot a_m r_m a_m^{-1}  \centerdot b_k s_k^{-1} b_k^{-1} \centerdot \dots b_1 s_1^{-1} b_1^{-1} \equiv 1$$
from the first set and 
	$$b_k s_k^{-1} b_k^{-1} \centerdot \dots b_1 s_1^{-1} b_1^{-1} \centerdot a_1 r_1 a_1^{-1} \centerdot \dots \centerdot a_m r_m a_m^{-1} \equiv 1,$$
from the second set. It is trivial to see that these elements are cyclic permutations one of the other.
\end{remark}

\bigskip

Let $\langle \, X \, | \, R \, \rangle$ be a presentation for a group $G$, with $X$ the set of generators and $R$ that of basic relators. We will assume without loss of generality that $R$ contains the inverse of any of its elements and the reduced form of the cyclic permutations of any of its elements. If $r_1, \dots, r_n \in R$ are such that the identity in normal form
	\begin{equation} \label{idamre} a_1 r_1 a_1^{-1} \centerdot \dots \centerdot a_n r_n a_n^{-1} \equiv 1	\end{equation}
holds, then (\ref{idamre}) determines a product of conjugates of basic relators equal to 1 not only in $G$ but also in $\mathcal{F}(X)$ (we recall that $G$ is a quotient of $\mathcal{F}(X)$).

\bigskip

In order to formalize these notions we introduce some definitions (we will follow \cite{BH}). Let us set $Y := \mathcal{F}(X) \times R$, let us define the inverse of an element $(a, r) \in Y$ as $(a, r^{-1})$ and let us denote $H$ the free monoid on $Y \cup Y^{-1}$. $H$ is the set of finite sequences of elements of $Y$. We denote an element of $H$ as $[(a_1, r_1), \dots, (a_n, r_n)]$, where $a_i \in \mathcal{F}(X)$ and $r_i \in R$. The trivial element of $H$ is the sequence with zero elements.

Let $h := [(a_1, r_1), \dots, (a_n, r_n)] \in H$ and let $(a, r)$, $(b, s)$ be two consecutive elements $(a_i, r_i), (a_{i+1}, r_{i+1})$ of $h$ for some $i \in \{1, \dots, n - 1\}$, in particular $a = a_i$, $r = r_i$, $b = a_{i+1}$, $s = r_{i+1}$. We define the following transformations on $h$ that change it to another element of $H$:
\begin{itemize}
	\item [--] a \textit{Peiffer deletion} deletes in $h$ the elements $(a, r)$, $(b, s)$ if $a = b$ and $r^{-1} = s$;

	
	\item [--] an \textit{exchange} replaces in $h$ the pair of elements $(a, r)$, $(b, s)$  either with the pair
		$$(b, s), (\rho(b s^{-1} b^{-1} a), r)$$
	(we call it an \textit{exchange of type A at the $i$-th position} or \textit{exchange of type A-$i$}) or with the pair
		$$(\rho(a r a^{-1} b), s), (a, r)$$
	(we call it an \textit{exchange of type B at the $i$-th position} or \textit{exchange of type B-$i$})).	
\end{itemize}
Peiffer deletions and exchanges leave unchanged the $(a_j, r_j)$ for $j \neq i, i+1$.




Given two elements $h_1, h_2 \in H$, we say that \textit{$h_1$ Peiffer collapses to $h_2$} if $h_2$ can be obtained from $h_1$ by applying Peiffer deletions and exchanges. 

There is a bijection $\chi$ between $H$ and the set of products of conjugates of elements of $R$ given by associating the element $h = [(a_1, r_1), \dots, (a_n, r_n)] \in H$ with the following product of conjugates of elements of $R$,
	$$a_1 r_1 a_1^{-1} \centerdot \dots \centerdot a_n r_n a_n^{-1}.$$
Also we define a monoid homomorphism $\psi$ from $H$ to $\mathcal{F}(X)$ by $\psi(h) := \rho(a_1 r_1 a_1^{-1} \dots a_n r_n a_n^{-1})$. If $\psi(h) = 1$, that is if $h$ belongs to the kernel of $\psi$, then we say that $h$ determines the identity among relations in normal form (\ref{idamre}). We say that this identity among relations \textit{Peiffer collapses to 1} if $h$ Peiffer collapses to the trivial element of $H$.

The restriction of $\chi$ to the kernel of $\psi$ determines a bijection with the set of identities among relations in normal form involving elements of $R$.


\begin{remark} \label{iarPeiffCons} \rm We have seen in the introduction to this section that if $r_1$, $\dots$, $r_n$ are relators of a group presentation $\mathcal{P} := \langle \, X \, | \, S \, \rangle$ then an identity among relations involving $r_1, \dots, r_n$ determines an identity among relations for $\mathcal{P}$, that is an identity involving the basic relators of $\mathcal{P}$.
	
By virtue of the Corollary at page 159 of \cite{BH} we have also that if the identity involving $r_1, \dots, r_n$ Peiffer collapses to 1 then also the identity involving basic relators determined by it Peiffer collapses to 1.
\end{remark}

\begin{definition} \label{defBasIAR} \rm An identity among relations in normal form is said \textit{basic} if the corresponding element of $H$ collapses to 1 by means of only Peiffer deletions. Let $h := [(a_1, r_1), \dots, (a_n, r_n)]$ be that element; if moreover $a_1 = a_2 = \dots = a_n$ then that identity among relations is said \textit{strictly basic}.
\end{definition}

\noindent \textbf{Example.} The identity among relations $a r a^{-1} \centerdot b s b^{-1} \centerdot b s^{-1} b^{-1} \centerdot a r^{-1} a^{-1} \equiv 1$ is basic. If moreover $a = b$ it is strictly basic.
\medskip

\begin{remark} \label{equCondBas} \rm As before we denote $Y$ the set $\mathcal{F}(X) \times R$ and let us consider the free group on $Y$. Since $H$ is the free monoid on $Y \cup Y^{-1}$, then an identity among relations is basic if and only if the corresponding element of $H$ is 1 in the free group on $Y$.
\end{remark}

\begin{remark} \label{equCondBas2} \rm We now show that if one normal form of an identity is basic, then all the normal forms of that identity are basic.

Indeed by Remark \ref{equCondBas}, an identity among relations is basic if and only if the corresponding element of $\mathcal{M}(Y \cup Y^{-1})$ reduces to 1 in the free group on $Y$. By Remark \ref{difForIds}, two normal forms of the same identity are cyclic permutations one of the other, therefore if one of them reduces to 1 in the free group, all its cyclic permutations reduce to 1 too because a cyclic permutation is a special case of conjugation and in a group the only conjugate to 1 is 1 itself.
\end{remark}

\begin{definition} \label{defBasIAR2} \rm An identity among relations is said \textit{(strictly) basic} if one (and by Remark \ref{equCondBas2} all) of its normal forms is (strictly) basic.
\end{definition}

\begin{remark} \label{iarBasicConj} \rm Let us given a basic identity among relations involving some words and let us consider the identity involving the same words where some of these words are replaced by conjugations as seen in Remark \ref{iarConjSome}. Then it is obvious that the new identity among relations is basic too.
\end{remark}

\begin{remark} \label{basicIAR} \rm Let $a_1, \dots, a_n, r_1, \dots, r_n$ be words and let $k < n$; then it is easy to see that the following two identities hold and that they are basic:  
	$$a_k r_k a_k^{-1} \centerdot \dots \centerdot a_n r_n a_n^{-1} \equiv a_{k-1} r_{k-1}^{-1} a_{k-1}^{-1} \centerdot \dots \centerdot  a_1 r_1^{-1} a_1^{-1} \centerdot a_1 r_1 a_1^{-1} \centerdot \dots \centerdot a_n r_n a_n^{-1}$$
and
	$$a_1 r_1 a_1^{-1} \centerdot \dots \centerdot a_k r_k a_k^{-1} \equiv  a_1 r_1 a_1^{-1} \centerdot \dots \centerdot a_n r_n a_n^{-1} \centerdot a_n r_n^{-1} a_n^{-1} \centerdot \dots a_{k+1} r_{k+1}^{-1} a_{k+1}^{-1}.$$
\end{remark}

\begin{remark} \label{pcrFrom} \rm Let $R$ be a set of reduced words, that is $R \subset \mathcal{F}(X)$. Let us consider the following operations on the elements of $\mathcal{F}(X)$: reduced product, cyclically reduced product, cyclically reduced form, conjugations, reduced form of cyclic permutations. 

Let $\mathcal{N}$ be the normal closure of $R$ in $\mathcal{F}(X)$; then $\mathcal{N}$ is the subset of $\mathcal{F}(X)$ generated by $R$ and by the above operations. Indeed cyclic permutations and the cyclically reduced form are special cases of conjugations and the cyclically reduced product is obtained by composing the cyclically reduced form with the reduced product.

Let $\sigma$ be a sequence of the above listed operations on the elements of $R$ and let $u \in \mathcal{N}$ be the result of $\sigma$. We will show how to associate with $\sigma$ an element $[(a_1, r_1), \cdots, (a_n, r_n)]$ of $H$ with the property that
	$$\rho(a_1 r_1 a_1^{-1} \cdots a_n r_n a_n^{-1}) = u.$$

\smallskip

- Let us take a sequence of length one. This is an element $r$ of $R$ and we associate with it the element $[(1, r)]$ of $H$. 

We can suppose by induction hypothesis that there is a natural number $k$ such that for each sequence $\sigma$ of length less than $k$ we have associated with $\sigma$ an element of $H$ with the properties specified above.

- Let us given sequences $\sigma, \sigma'$ of length less than $k$ with results respectively $u$ and $u'$. Then by induction hypothesis there exist $r_1, \cdots, r_m$, $s_1, \cdots, s_n \in R$ and $a_1, \cdots, a_m$, $b_1, \cdots, b_n \in \mathcal{F}(X)$ such that we have associated with $\sigma$ an element $[(a_1, r_1), \cdots, (a_m, r_m)] \in H$ such that 
	$$u = \rho(a_1 r_1 a_1^{-1} \cdots a_m r_m a_m^{-1})$$ 
and with $\sigma'$ an element $[(b_1, s_1), \cdots, (b_n, s_n)] \in H$ such that 
	$$u' =\rho(b_1 s_1 b_1^{-1} \cdots b_n s_n b_n^{-1}).$$
Let us consider the sequence $\tau$ having all the operations of $\sigma$ and $\sigma'$ plus the reduced product of $u$ by $u'$. Then we associate with $\tau$ the element
	$$[(a_1, r_1), \cdots, (a_m, r_m), (b_1, s_1), \cdots, (b_n, s_n)] \in H;$$
obviously $\rho(a_1 r_1 a_1^{-1} \cdots a_m r_m a_m^{-1} b_1 s_1 b_1^{-1} \cdots b_n s_n b_n^{-1}) = \rho(u u')$.

- Now let us consider a sequence $\sigma_1$ having all the operations of $\sigma$ plus the conjugation of $u$ by a word $b$. Then we associate with $\sigma_1$ the element $[(c_1, r_1), \cdots, (c_m, r_m)] \in H$ where $c_i = \rho(b a_i)$. Obviously 
	$$\rho(c_1 r_1 c_1^{-1} \cdots c_m r_m c_m^{-1}) = \rho(b u b^{-1}).$$

- Now let us consider a sequence $\sigma_2$ having all the operations of $\sigma$ plus the cyclically reduced form of $u$. The previous cases show how to associate with $\sigma_2$ an element of $H$ with the above properties because by virtue of (\ref{scope}) of Proposition \ref{summar} the cyclically reduced form is a special case of conjugation.

-Now let us consider a sequence $\sigma_3$ having all the operations of $\sigma$ plus the reduced form of a cyclic permutation of $u$. This means that there exist words $u_1, u_2$ such that $u = u_1 u_2$ and that the last operation of $\sigma_3$ is the conjugation of $u$ by either $u_2$ or by $u_1^{-1}$. This implies that we associate with $\sigma_3$ the element $[(c_1, r_1), \cdots, (c_m, r_m)] \in H$ where $c_i$ for $i = 1, \cdots, m$ can be either equal to $\rho(u_2 a_i)$ or to $\rho(u_1^{-1} a_i)$.

- Finally if $\tau$ is the sequence having all the operations of $\sigma$ and $\sigma'$ plus the cyclically reduced product of $u$ by $u'$, then the previous cases show how to associate with $\tau$ an element of $H$ with the properties stated above because the cyclically reduced product is the composition of the reduced product with the cyclically reduced form.
\end{remark}

\begin{remark} \label{seqFromProd} \rm We show how to associate with a product of conjugates of elements of $R$ a sequence of operations on $R$ as described in Remark \ref{pcrFrom}.

Indeed with $a_1 r_1 a_1^{-1} \centerdot \dots \centerdot a_m r_m a_m^{-1}$ we associate the following sequence: conjugation of $r_1$ with $a_1$; conjugation of $r_2$ with $a_2$; $\dots$; conjugation of $r_m$ with $a_m$; reduced product of $a_1 r_1 a_1^{-1}$ by $a_2 r_2 a_2^{-1}$; reduced product of $a_1 r_1 a_1^{-1} a_2 r_2 a_2^{-1}$ by $a_3 r_3 a_3^{-1}$; $\dots$; reduced product of $a_1 r_1 a_1^{-1} \dots  a_{m-1} r_{m-1} a_{m-1}^{-1}$ by $a_m r_m a_m^{-1}$.

In particular, given words $u$ and $v$, we associate with $u*v$ the product $\alpha u \alpha^{-1} \centerdot \alpha v \alpha^{-1}$, where $\alpha$ is such that $u*v = \rho(\alpha u v \alpha^{-1})$ (see (\ref{scope}) of Proposition \ref{summar}). \end{remark}

\begin{remark} \label{idFrom} \rm Let $u, u' \in \mathcal{N}$ be obtained respectively from sequences $\sigma$ and $\sigma'$ of operations on $R$ as described in Remark \ref{pcrFrom}, in particular in view of Remark \ref{seqFromProd} let $u, u'$ be the reduced forms of products of conjugates of elements of $R$. Let us suppose that $u \sim u'$; we show how to associate with $\sigma$, $\sigma'$ and the equivalence $u \sim u'$ an identity among relations involving elements of $R$.	
	
Indeed the procedure described in Remark \ref{pcrFrom} associates with $\sigma$ and $\sigma'$ elements $h := [(a_1, r_1), \dots, (a_m, r_m)]$ and $h' := [(b_1, s_1), \dots, (b_n, s_n)]$ of $H$ such that $\rho(a_1 r_1 a_1^{-1} \dots a_m r_m a_m^{-1}) = u$ and $\rho(b_1 s_1 b_1^{-1} \dots b_n s_n b_n^{-1}) = u'$.

If $u \sim v$ then $u$ and $v$ are conjugates and thus there exists a word $c$ such that $u = \rho(c v c^{-1})$. We associate with $\sigma$, $\sigma'$ and the equivalence $u \sim v$ the following identity among relations 
	$$a_1 r_1 a_1^{-1} \centerdot \dots \centerdot a_m r_m a_m^{-1} \equiv d_1 s_1 d_1^{-1} \centerdot \dots \centerdot d_n s_n d_n^{-1}$$
where $d_i = c b_i$.	
\end{remark}

\begin{remark} \label{idFrom2} \rm Let $R \subset \mathcal{F}(X)$, let $\sigma$ and $\sigma'$ be sequences of operations on $R$ as described in Remark \ref{pcrFrom}, let $w$ and $w'$ be the results of $\sigma$ and $\sigma'$ respectively and let $w \sim w'$. 
	
Let us suppose that $w \sim w'$ and that the identity among relations that by Remark \ref{idFrom} follows from this equivalence is 
	\begin{equation} \label{idFrom2Eq1} a_1 r_1 a_1^{-1} \centerdot \dots \centerdot a_m r_m a_m^{-1} \equiv a_{m+1} r_{m+1} a_{m+1}^{-1} \centerdot \dots \centerdot a_n r_n a_n^{-1}.
	\end{equation}

Let $\sigma''$ be a sequence of operations on $R$ that has all the operations of $\sigma'$ plus a cyclic permutation and let $w''$ be the result of $\sigma''$. This implies that $w' \sim w''$ and by transitivity that $w \sim w''$. We prove that the identity among relations that by Remark \ref{idFrom} follows from $\sigma''$ is
	\begin{equation} \label{idFrom2Eq2} b_1 r_1 b_1^{-1} \centerdot \dots \centerdot b_m r_m b_m^{-1} \equiv b_{m+1} r_{m+1} b_{m+1}^{-1} \centerdot \dots \centerdot b_n r_n b_n^{-1},\end{equation}	
where $b_1, \dots, b_n$ are words such that $b_k = b a^{-1} a_k$ for some words $a, b$ and for $k = 1, \dots, n$.

Indeed by the proof of Remark \ref{idFrom}	we have that there exists a word $a$ such that the products of conjugates of elements of $R$ associated with $\sigma$ and $\sigma'$ are $a_1 r_1 a_1^{-1} \centerdot \dots \centerdot a_m r_m a_m^{-1}$ and $c_{m+1} r_{m+1} c_{m+1}^{-1} \centerdot \dots \centerdot c_n r_n c_n^{-1}$ respectively, where $c_j = \rho(a^{-1} a_j)$ for $j = m+1, \dots, n$.
	
From Remark \ref{pcrFrom} we have that there exists a word $b$ such that the product of conjugates of elements of $R$ associated with $\sigma''$ is $b_{m+1} r_{m+1} b_{m+1}^{-1} \centerdot \dots \centerdot b_n r_n b_n^{-1}$, where $b_j = \rho(b c_j) = \rho(b a^{-1} a_j)$ for $j = m+1, \dots, n$.

This means that $w' = \rho(a^{-1} w a)$, $w'' = \rho(b w' b^{-1})$, therefore  $w'' = \rho(b a^{-1} w a b^{-1})$ and by setting $b_i : = b a^{-1} a_i$ for $i = 1, \dots, m$ we have that the identity among relations that by Remark \ref{idFrom} follows from the equivalence $w \sim w''$ is
	$$b_1 r_1 b_1^{-1} \centerdot \dots \centerdot b_m r_m b_m^{-1} \equiv b_{m+1} r_{m+1} b_{m+1}^{-1} \centerdot \dots \centerdot b_n r_n b_n^{-1},$$
where $b_k = b a^{-1} a_k$ for $k = 1, \dots, n$.
\end{remark}

\begin{remark} \label{basicToo} \rm If (\ref{idFrom2Eq1}) is basic then (\ref{idFrom2Eq2}) is basic too. Indeed by Remark \ref{equCondBas2} if (\ref{idFrom2Eq1}) is basic then any normal form of (\ref{idFrom2Eq1}) corresponds to 1 in the free group on $Y$. We have that also any normal form of (\ref{idFrom2Eq2}) corresponds to 1 in the free group on $Y$, because $b_k = b a^{-1} a_k$ for every $k$, so if for some $h, k$ we have that $a_h = a_k$ then $b_h = b_k$.
\end{remark}

\begin{remark} \label{idRev} \rm Let $R \subset \mathcal{F}(X)$ and let $\sigma$ be a sequence of operations on $R$ as described in Remark \ref{pcrFrom}. We define the \textit{reverse} of $\sigma$, denoted $\underline{\sigma}$, by taking the reverse of each operation of $\sigma$. With $\underline{\sigma}$ we can associate a product of conjugates of reverses of elements of $R$ that is the reverse of that associated with $\sigma$. In particular if 
	$$a_1 r_1 a_1^{-1} \centerdot \dots \centerdot a_m r_m a_m^{-1}$$ 
is the product of conjugates of elements of $R$ associated with $\sigma$ then the one associated with $\underline{\sigma}$ is 
	$$\underline{a_m} \, \underline{r_m} \,\underline{a_m^{-1}} \centerdot \dots \centerdot \underline{a_1} \, \underline{r_1} \, \underline{a_1^{-1}}.$$
In particular the result of $\underline{\sigma}$ is the reverse of the result of $\sigma$.
	
Now let $\sigma$ and $\sigma'$ be sequences of operations on $R$, let $w$ and $w'$ be the results of $\sigma$ and $\sigma'$ respectively and let $w \sim w'$. We have that $\underline{w}$ and $\underline{w'}$ are the results of $\underline{\sigma}$ and $\underline{\sigma'}$, that $\underline{w} \sim \underline{w'}$ and that the identity among relations that follows from this equivalence is the reverse of the one that follows from $w \sim w'$ as shown in Remark \ref{idFrom2}. In particular the first identity is basic in $r_1,  \dots, r_m$ if and only if the second is basic in $\underline{r_1},  \dots, \underline{r_m}$.	
\end{remark}

\section{Some technical results} \label{secB}

We recall that \textit{basic} identities among relations have been defined in Definitions \ref{defBasIAR} and \ref{defBasIAR2}.

\begin{lemma} \label{iarAppB} Let $w, h, f, u$ be words such that  
	\begin{equation} \label{iarAppBHyp1} w \sim \rho(h f h^{-1} u) \end{equation} 
or	
	\begin{equation} \label{iarAppBHyp1A} w \sim \rho(u h f h^{-1}) \end{equation} 	
holds. Then there exists a word $b$ such that $f = \rho(h^{-1} b^{-1} w b u^{-1} h)$ and the identity among relations involving $u$, $w$, $u^{-1}$, $w^{-1}$ that by Remark \ref{idFrom} follows from (\ref{iarAppBHyp1}) or (\ref{iarAppBHyp1A}) is basic. \end{lemma}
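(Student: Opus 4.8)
The plan is to prove the statement under hypothesis (\ref{iarAppBHyp1}) and then to reduce the case of (\ref{iarAppBHyp1A}) to it by reversion, as is done in the proof of Theorem \ref{mainTheor1}; throughout one may take $w$, $u$, $f$ reduced. The existence of $b$ is a short free-group manipulation: if (\ref{iarAppBHyp1}) holds then by Proposition \ref{summar}(\ref{remCycPerm}) there is a word $p$ with $wp = p\,\rho(hfh^{-1}u)$, whence $\rho(w) = \rho(p\,hfh^{-1}u\,p^{-1})$ in $\mathcal{F}(X)$; multiplying on the left by $h^{-1}p^{-1}$ and on the right by $pu^{-1}h$ and reducing isolates $f$ as $\rho(h^{-1}p^{-1}wpu^{-1}h)$, so $b := \rho(p)$ works. (Under (\ref{iarAppBHyp1A}) the same computation gives $f = \rho\bigl(h^{-1}(pu)^{-1}w(pu)u^{-1}h\bigr)$, so there $b := \rho(pu)$.)

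For the basicness claim under (\ref{iarAppBHyp1}), I would attach to each side of the equivalence $w \sim \rho(hfh^{-1}u)$ the element of $H$ prescribed by Remark \ref{pcrFrom}. The left side $w$ gives $[(1,w)]$. For the right side, the formula just found displays $f$ as the reduced product of the conjugates $(h^{-1}b^{-1})\,w\,(h^{-1}b^{-1})^{-1}$ and $(h^{-1})\,u^{-1}\,(h^{-1})^{-1}$, i.e.\ as $[(h^{-1}b^{-1},w),(h^{-1},u^{-1})]$; conjugating by $h$ turns $hfh^{-1}$ into $[(b^{-1},w),(1,u^{-1})]$, and the reduced product with $[(1,u)]$ on the right turns $\rho(hfh^{-1}u)$ into $[(b^{-1},w),(1,u^{-1}),(1,u)]$. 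Since $\rho(w) = \rho\bigl(b\,\rho(hfh^{-1}u)\,b^{-1}\bigr)$, Remark \ref{idFrom} (with conjugating word $b$) then yields the identity $w \equiv w \centerdot bu^{-1}b^{-1}\centerdot bub^{-1}$, a normal form of which corresponds to the element $[(1,w^{-1}),(1,w),(b,u^{-1}),(b,u)]$ of $H$. This collapses to the trivial element by two Peiffer deletions (first the pair $(1,w^{-1}),(1,w)$, then $(b,u^{-1}),(b,u)$), so the identity is basic by Definition \ref{defBasIAR}.

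To handle (\ref{iarAppBHyp1A}), I would reverse. Since reversion commutes with passing to reduced forms (as in Proposition \ref{summar}(\ref{revCycRedForm})) and with inversion, and $\underline{\,uhfh^{-1}\,} = \underline{h}^{-1}\,\underline{f}\,\underline{h}\,\underline{u}$ by Proposition \ref{summar}(\ref{reverse2}), applying Proposition \ref{summar}(\ref{reversesim}) converts (\ref{iarAppBHyp1A}) for $(w,h,f,u)$ into $\underline{w} \sim \rho(\underline{h}^{-1}\,\underline{f}\,\underline{h}\,\underline{u})$, which is an instance of (\ref{iarAppBHyp1}) with $(w,h,f,u)$ replaced by $(\underline{w},\underline{h}^{-1},\underline{f},\underline{u})$ (note $(\underline{h}^{-1})^{-1} = \underline{h}$). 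The case already proved furnishes a word $B$ with $\underline{f} = \rho\bigl(\underline{h}\,B^{-1}\,\underline{w}\,B\,\underline{u}^{-1}\,\underline{h}^{-1}\bigr)$ and a basic identity; reversing back (Proposition \ref{summar}(\ref{reverse2}) again) gives $f = \rho(h^{-1}b^{-1}wbu^{-1}h)$ with $b := \rho(\underline{B}^{-1}u)$, and by Remark \ref{idRev} the identity following from (\ref{iarAppBHyp1A}) is the reverse of the above basic identity, hence basic.

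The step I expect to require care is the second one, namely carrying the element of $H$ correctly through the chain of operations of Remark \ref{pcrFrom}. After substituting the expression for $f$, the $H$-element attached to $\rho(hfh^{-1}u)$ has the conjugate of $u^{-1}$ coming from $f$ immediately followed by the appended conjugate of $u$, and once the cyclic-permutation conjugation of Remark \ref{idFrom} is applied these two entries — together with the two conjugates of $w$ — pair off under Peiffer deletions; this adjacency fails for $\rho(uhfh^{-1})$, where the extra $u$ is prepended, which is exactly why that case is routed through reversion instead of being treated head-on.
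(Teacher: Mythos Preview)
Your argument is correct. For case (\ref{iarAppBHyp1}) you carry out essentially the paper's own proof: the paper also extracts $b$ from the cyclic-permutation equivalence, substitutes $f = \rho(h^{-1}b^{-1}wbu^{-1}h)$ back into $w = \rho(bhfh^{-1}ub^{-1})$ to obtain $w = \rho(w\,bu^{-1}ub^{-1})$, and reads off the basic identity $w \equiv w \centerdot bu^{-1}b^{-1} \centerdot bub^{-1}$ directly (invoking Remark \ref{basicIAR}) rather than tracking the $H$-element step by step as you do; the content is the same, your version is just more explicit about the bookkeeping of Remark \ref{pcrFrom}.

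Where you genuinely differ is in the treatment of (\ref{iarAppBHyp1A}). The paper simply says the proof is ``analogous'' and indeed the same substitution done from $w = \rho(b' u h f h^{-1} (b')^{-1})$ gives $w = \rho(b' u u^{-1} (b')^{-1} w)$, i.e.\ the basic identity $b' u (b')^{-1} \centerdot b' u^{-1} (b')^{-1} \centerdot w \equiv w$, in one line. Your reversion route via Proposition \ref{summar}(\ref{reverse2}),(\ref{reversesim}) and Remark \ref{idRev} is also valid and yields the same $b = \rho(\underline{B}^{-1}u)$ you found directly in your parenthetical computation, but it is noticeably longer than the direct analogue. The reversion trick buys nothing here beyond what it buys in Theorem \ref{mainTheor1}, where the two sides are genuinely asymmetric; in this lemma the ``$u$ on the left'' case is no harder head-on than ``$u$ on the right''.
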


\begin{proof} We give the proof of (\ref{iarAppBHyp1}), that of (\ref{iarAppBHyp1A}) being analogous.
	
(\ref{iarAppBHyp1}) implies that there exist words $b, c$ such that $w = bc$ and $\rho(h f h^{-1} u) = c b$, implying that 
	\begin{equation} \label{iarAppBHyp2} w = \rho(b h f h^{-1} u b^{-1}). \end{equation}
From (\ref{iarAppBHyp2}) we have that $f = \rho(h^{-1} b^{-1} w b u^{-1} h)$, which implies the first part of the claim, so we have the equality
	$$w = \rho(b h h^{-1} b^{-1} w b u^{-1} h h^{-1} u b^{-1}) = \rho(w b u^{-1} u b^{-1}),$$
which by Remark \ref{basicIAR} implies a basic identity among relations.\end{proof}

\begin{lemma} \label{baseLemma} Let $u, w, \alpha, \beta$ be words. Let us set $u_0 := \rho(\alpha u \alpha^{-1})$ and either $f := \rho(\alpha u^{-1} \alpha^{-1} \beta w \beta^{-1})$ or $f := \rho(\beta w \beta^{-1} \alpha u^{-1} \alpha^{-1})$. Then
	\begin{equation} \label{baLem1} \hat{\rho}(w) \sim u_0 * f\end{equation}	
and the identity among relations involving $u, w, u^{-1}, w^{-1}$ that by Remark \ref{idFrom} follows from (\ref{baLem1}) is basic.
	
Now let $\beta w \beta^{-1}$ be reduced; if $f = \rho(\alpha u^{-1} \alpha^{-1} \beta w \beta^{-1})$ then (\ref{baLem1}) can be replaced by the stronger result 
	\begin{equation} \label{baLem3} \hat{\rho}(w) = u_0 * f;	
	\end{equation}	
if $f := \rho(\beta w \beta^{-1} \alpha u^{-1} \alpha^{-1})$ then (\ref{baLem1}) can be replaced by 
	\begin{equation} \label{baLem4} \hat{\rho}(w) = f * u_0.	
	\end{equation}
\end{lemma}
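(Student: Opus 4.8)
The plan is to prove Lemma~\ref{baseLemma} by reducing it to the two previously established building blocks, namely Lemma~\ref{baseLemma}'s companion Lemma~\ref{iarAppB} and the elementary properties of $\hat{\rho}$ and the cyclically reduced product collected in Proposition~\ref{summar} and Corollary~\ref{permCycRedForm}. I will treat the case $f := \rho(\alpha u^{-1} \alpha^{-1} \beta w \beta^{-1})$ in detail and indicate that the other case is symmetric (pass to reverses, using \eqref{reversesim}, \eqref{revCRP} and \eqref{revCycRedForm} of Proposition~\ref{summar}, exactly as in the proof of Theorem~\ref{mainTheor1}).

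First I would compute $u_0 * f$ directly. By definition $u_0 * f = \hat{\rho}(u_0 f)$, and since $u_0 = \rho(\alpha u \alpha^{-1})$ and $f = \rho(\alpha u^{-1} \alpha^{-1} \beta w \beta^{-1})$, part \eqref{u*vNonRed} of Proposition~\ref{summar} gives
	$$u_0 * f = \hat{\rho}\bigl(\alpha u \alpha^{-1} \alpha u^{-1} \alpha^{-1} \beta w \beta^{-1}\bigr) = \hat{\rho}\bigl(\beta w \beta^{-1}\bigr),$$
the last equality because $\alpha u \alpha^{-1} \alpha u^{-1} \alpha^{-1}$ reduces to $1$ and $\hat{\rho}$ depends only on the reduced form (again \eqref{u*vNonRed}, or \eqref{scope}). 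Now Corollary~\ref{permCycRedForm} tells us that $\hat{\rho}(\beta w \beta^{-1})$ is a cyclic permutation of $\hat{\rho}(w)$, which is exactly \eqref{baLem1}; and if in addition $\beta w \beta^{-1}$ is reduced, the same corollary gives $\hat{\rho}(\beta w \beta^{-1}) = \hat{\rho}(w)$, yielding the sharper \eqref{baLem3}. For \eqref{baLem4} in the case $f := \rho(\beta w \beta^{-1} \alpha u^{-1} \alpha^{-1})$, compute $f * u_0 = \hat{\rho}(\beta w \beta^{-1} \alpha u^{-1} \alpha^{-1} \alpha u \alpha^{-1}) = \hat{\rho}(\beta w \beta^{-1})$ and conclude as before.

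It remains to identify the identity among relations that Remark~\ref{idFrom} attaches to the equivalence \eqref{baLem1} and to check it is basic. The point is that $u_0 * f$ arises from the sequence of operations on $\{u, w\}$ consisting of: conjugating $u$ by $\alpha$ to get $u_0$; conjugating $u^{-1}$ by $\alpha$ and $w$ by $\beta$ and taking their reduced product (and cyclically reduced form) to get $f$; then taking the cyclically reduced product $u_0 * f$. Unwinding via Remarks~\ref{pcrFrom} and \ref{seqFromProd}, the product of conjugates of $\{u, u^{-1}, w\}$ associated with this sequence is, up to an overall conjugating word $\gamma$ coming from \eqref{scope},
	$$(\gamma\alpha)\, u\, (\gamma\alpha)^{-1} \centerdot (\gamma\alpha)\, u^{-1}\, (\gamma\alpha)^{-1} \centerdot (\gamma\beta)\, w\, (\gamma\beta)^{-1},$$
and the right-hand side $\hat{\rho}(w) = \hat{\rho}(\beta w\beta^{-1})$ is a conjugate $\delta w \delta^{-1}$ of $w$. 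The resulting identity has the shape $a r_1 a^{-1} \centerdot a r_1^{-1} a^{-1} \centerdot b w b^{-1} \equiv \delta w \delta^{-1}$ where the first two terms cancel by a single Peiffer deletion, so by Definition~\ref{defBasIAR} and Remark~\ref{basicIAR} it is basic; alternatively this is precisely the situation covered by Lemma~\ref{iarAppB} with $h = 1$, and I would simply invoke that lemma (with $f$ there being our $w$ and $u$ there being our $u^{-1}$, suitably conjugated).

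The main obstacle I anticipate is purely bookkeeping rather than conceptual: one has to be careful that the conjugating words introduced by \eqref{scope} when forming $u_0 = \rho(\alpha u \alpha^{-1})$, $f$, and $u_0 * f$ are tracked consistently through Remark~\ref{pcrFrom} so that the two coefficients of $u$ and $u^{-1}$ in the associated element of $H$ genuinely coincide, which is what makes the Peiffer deletion available and hence makes the identity basic. Beyond that, everything reduces to the two algebraic facts that $\alpha u \alpha^{-1}\,\alpha u^{-1}\alpha^{-1}$ is trivial and that conjugation does not change the cyclically reduced form up to cyclic permutation (Corollary~\ref{permCycRedForm}); and the passage to reverses to get the $f * u_0$ statement is routine given parts \eqref{reversesim}, \eqref{revCRP}, \eqref{revCycRedForm} of Proposition~\ref{summar} and Remark~\ref{idRev}.
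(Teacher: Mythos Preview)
Your computation of \eqref{baLem1}, \eqref{baLem3} and \eqref{baLem4} is correct and is exactly what the paper does (the paper handles the second form of $f$ by computing $f*u_0$ directly and invoking Proposition~\ref{puzo}, rather than by reverses, but that is immaterial).

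The gap is in the ``basic'' claim. You correctly identify the shape of the identity as
\[
(\gamma\alpha)\,u\,(\gamma\alpha)^{-1}\centerdot(\gamma\alpha)\,u^{-1}\,(\gamma\alpha)^{-1}\centerdot(\gamma\beta)\,w\,(\gamma\beta)^{-1}\;\equiv\;(\text{coeff})\,w\,(\text{coeff})^{-1},
\]
and you observe that the first two terms Peiffer-delete. But that alone does \emph{not} make the identity basic: after that deletion one is left with $(\gamma\beta)\,w\,(\gamma\beta)^{-1}\equiv(\text{coeff})\,w\,(\text{coeff})^{-1}$, and for a further Peiffer deletion one needs the two conjugating coefficients to coincide as elements of $\mathcal{F}(X)$. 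The right-hand coefficient is not $\delta$ alone but $x\delta$, where $x$ comes from the cyclic permutation realizing $u_0*f\sim\hat{\rho}(w)$ and $\delta$ from $\hat{\rho}(w)=\rho(\delta w\delta^{-1})$. A priori $\rho(\gamma\beta)\ne\rho(x\delta)$; all you know is that they conjugate $w$ to the same element, so $\rho((x\delta)^{-1}\gamma\beta)$ centralizes $w$. The paper closes this gap via Remark~\ref{commFG}: writing $w=\rho(c^m)$ and $\rho((x\delta)^{-1}\gamma\beta)=\rho(c^n)$, one checks that $\delta$ may be replaced by $\epsilon:=\rho(x^{-1}\gamma\beta)$ in \eqref{baLemEq2} without changing $\hat{\rho}(w)$, and with that choice $\rho(x\delta)=\rho(\gamma\beta)$, so the remaining pair Peiffer-deletes. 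Your appeal to Lemma~\ref{iarAppB} does not substitute for this step: that lemma's hypothesis is $w\sim\rho(hfh^{-1}u)$ with the \emph{given} $w$ on the left, not $\hat{\rho}(w)$, and its proof exploits that specific form; it does not absorb the extra cyclic-permutation word $x$ that appears here. Your ``bookkeeping'' paragraph worries about matching the $u$ and $u^{-1}$ coefficients (which is automatic) but misses that the real issue is matching the two $w$-coefficients.
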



\begin{proof} By (\ref{scope}) of Proposition \ref{summar} there exist words $\gamma$ and $\delta$ such that 
	\begin{equation} \label{baLemEq1} u_0*f = \hat{\rho}(u_0 f) = \rho(\gamma u_0 f \gamma^{-1})
	\end{equation}
and
	\begin{equation} \label{baLemEq2} \hat{\rho}(w) = \rho(\delta w \delta^{-1}).	
	\end{equation}
Let $f = \rho(\alpha u^{-1} \alpha^{-1} \beta w \beta^{-1})$; then we have that 
	\begin{equation} \label{baLemEq3} \rho(u_0 f) = \rho(\alpha u \alpha^{-1} \alpha u^{-1} \alpha^{-1} \beta w \beta^{-1}) = \rho(\beta w \beta^{-1}),
	\end{equation}
therefore
	$$u_0 * f = \hat{\rho}(u_0 f) = \hat{\rho}(\beta w \beta^{-1}) \sim \hat{\rho}(w)$$
by virtue of Corollary \ref{permCycRedForm}, proving (\ref{baLem1}). If moreover $\beta w \beta^{-1}$ is reduced then $\hat{\rho}(\beta w \beta^{-1}) = \hat{\rho}(w)$ by Corollary \ref{permCycRedForm}, proving (\ref{baLem3}).

Let $f = \rho(\beta w \beta^{-1} \alpha u^{-1} \alpha^{-1})$; then we have that 
	\begin{equation} \label{baLemEq3'} \rho(f u_0) = \rho(\beta w \beta^{-1} \alpha u \alpha^{-1} \alpha u^{-1} \alpha^{-1}) = \rho(\beta w \beta^{-1}),
	\end{equation}
therefore
	$$u_0 * f \sim f * u_0 = \hat{\rho}(f u_0) = \hat{\rho}(\beta w \beta^{-1}) \sim \hat{\rho}(w)$$
by virtue of Corollary \ref{permCycRedForm}, proving (\ref{baLem1}). If moreover $\beta w \beta^{-1}$ is reduced then $\hat{\rho}(\beta w \beta^{-1}) = \hat{\rho}(w)$ by Corollary \ref{permCycRedForm} and this implies (\ref{baLem4}).

By (\ref{baLem1}) there exist words $x, y$ such that $\hat{\rho}(u_0 f) = xy$ and $\hat{\rho}(w) = yx$. This implies that $\hat{\rho}(u_0 f) = \rho(x \hat{\rho}(w) x^{-1})$, that together with (\ref{baLemEq2}) gives
	\begin{equation} \label{baLemEq5} \hat{\rho}(u_0 f) = \rho(x \delta w \delta^{-1} x^{-1})\end{equation}
and applying (\ref{baLemEq5}) to (\ref{baLemEq1}) we have that
	$$\rho(\gamma u_0 f \gamma^{-1}) = \rho(x \delta w \delta^{-1} x^{-1}).$$
The last equality together with (\ref{baLemEq3}) gives $\rho(\gamma \beta w \beta^{-1} \gamma^{-1}) = \rho(x \delta w \delta^{-1} x^{-1})$.

By Remark \ref{commFG} there exist $c \in \mathcal{F}(X)$ and $m, n \in \mathbb{Z}$ such that $w = \rho(c^m)$ and $\rho(x \delta) = \rho(\gamma \beta c^n)$, which implies that $\delta = \rho(x^{-1} \gamma \beta c^n)$. We show that we can assume that $n = 0$, that is $\rho(x \delta) = \rho(\gamma \beta)$. Indeed let $\epsilon := \rho(x^{-1} \gamma \beta)$, that is $\delta = \rho(\epsilon c^n)$; then 
	$$\rho(\delta w \delta^{-1}) = \rho(\epsilon c^n c^m c^{-n} \epsilon^{-1}) = \rho(\epsilon c^m \epsilon^{-1}) = \rho(\epsilon w \epsilon^{-1}),$$
so in (\ref{baLemEq2}) we could replace $\delta$ with $\epsilon$.

From (\ref{baLemEq1}) and the definitions of $u_0$ and $f$ we have that $u_0*f$ is the reduced form of 
	$$(\gamma \alpha) u (\alpha^{-1} \gamma^{-1}) \centerdot (\gamma \alpha) u^{-1} (\alpha^{-1} \gamma^{-1}) \centerdot (\gamma \beta) w (\beta^{-1} \gamma^{-1}).$$
The latter together with (\ref{baLemEq5}) implies that the identity among relations which by Remark \ref{basicIAR} follows from (\ref{baLem1}) is
	$$(\gamma \alpha) u (\alpha^{-1} \gamma^{-1}) \centerdot (\gamma \alpha) u^{-1} (\alpha^{-1} \gamma^{-1}) \centerdot (\gamma \beta) w (\beta^{-1} \gamma^{-1}) \equiv (x \delta) w (\delta^{-1} x^{-1}),$$
which is basic since by what seen above $\rho(x \delta) = \rho(\gamma \beta)$.
\end{proof}


\begin{proposition} \label{tecLemma2} Let $f, t, u$ be words such that $t f t^{-1} u$ is a reduced word. Let us set $v := \hat{\rho}(t f t^{-1} u)$. Then there exists a cyclic permutation $u_0$ of $u$ such that either
	\begin{equation} \label{u_0*fSim} v \sim u_0*f \end{equation}
	(in particular $v = u_0*f$ or $v = f*u_0$)	or there exists a non-empty word $h$ such that 
	\begin{equation} \label{hfh^{-1}u_0} v = h f h^{-1} u_0.\end{equation} 	

Finally the identities among relations involving $u, v, u^{-1}, v^{-1}$ that by Remark \ref{idFrom} follow from (\ref{u_0*fSim}) and (\ref{hfh^{-1}u_0}) are basic.
\end{proposition}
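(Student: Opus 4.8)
The plan is to compute $v=\hat{\rho}(tft^{-1}u)$ explicitly from the hypothesis that $w':=tft^{-1}u$ is reduced, by tracking the cyclic cancellation that produces $\hat{\rho}(w')$. First I would dispose of the degenerate cases: if $w'=1$ then $t=f=u=1$ and $v=1=f*u$; if $t=1$ then $v=\hat{\rho}(fu)=f*u$, and $v\sim u*f$ by Proposition \ref{puzo}, so in both cases one takes $u_0:=u$ and is in case (\ref{u_0*fSim}) with $v=f*u_0$. If $t\neq1$ and $w'$ is already cyclically reduced then $v=w'=tft^{-1}u$, so $v=hfh^{-1}u_0$ with $h:=t\neq1$ and $u_0:=u$ (note $u\neq1$ here, since $tft^{-1}$ is never cyclically reduced when $t\neq1$); this is case (\ref{hfh^{-1}u_0}). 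So I may assume $t\neq1$, $u\neq1$, and $w'$ reduced but not cyclically reduced. Then $w'=\rho(w')=p\,v\,p^{-1}$ with $v=\hat{\rho}(w')$ cyclically reduced and $p\neq1$ the unique such prefix, and the letters of the prefix $p$ and of the suffix $p^{-1}$ of $w'$ are exactly the cancelled letters, matched pairwise across the cyclic boundary. The crucial remark extracted from the hypothesis is that, since $t^{-1}u$ is reduced the first letters of $t$ and $u$ differ, while ``$w'$ not cyclically reduced'' forces the last letter of $u$ to be the inverse of the first letter of $t$; together these imply that $u$ itself is cyclically reduced. I then split according to whether the front cancellation reaches the end of $t$.

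Case $|p|<|t|$: write $t=p\,t'$ with $t'\neq1$. The matching equations for the first $|p|$ cancellation steps show that the suffix $p^{-1}$ is contained in $u$ when $|p|\le|u|$ --- so $u=u_1'\,p^{-1}$ and $v=t'\,f\,t'^{-1}\,p^{-1}u_1'$ --- while if $|p|>|u|$ they force the prefix $p$ of $t$ to be a prefix of a power of $u^{-1}$ (a genuine reduced word, since $u$ is cyclically reduced), and a telescoping computation of $v=p^{-1}w'p$ gives $v=t'\,f\,t'^{-1}\,u_0$ where $u_0$, the word formed by the first $|u|$ letters of $p^{-1}$, is a cyclic permutation of $u$. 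In either sub-case $v$ is literally of the form $hfh^{-1}u_0$ with $h:=t'\neq1$ and $u_0$ a cyclic permutation of $u$, and it is cyclically reduced because it equals $\hat{\rho}(w')$; this is case (\ref{hfh^{-1}u_0}).

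Case $|p|\ge|t|$: now the front cancellation consumes all of $t$, and the matching equations for the first $|t|$ steps force $t^{-1}$ to be a suffix of $u$ (if $|t|\le|u|$) or $t$ to be a prefix of a power of $u^{-1}$ (if $|t|>|u|$); in both cases one reads off that $g:=\rho(t^{-1}ut)$ equals the reduced form of a cyclic permutation $u_0$ of $u$ (for instance $\rho(t^{-1}ut)=\rho(u^{(2)}u^{(1)})$ when $u=u^{(1)}u^{(2)}$ with $|u^{(2)}|$ the residue of $|t|$ modulo $|u|$). Since $w'=tft^{-1}u\sim t^{-1}utf$, Corollary \ref{cycRedPerm} together with the fact that $\hat{\rho}$ depends only on the reduced form gives $v\sim\hat{\rho}(t^{-1}utf)=\hat{\rho}(gf)=u_0*f$, which is (\ref{u_0*fSim}). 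To upgrade this to the stated equality I would instead compute $v=p^{-1}w'p$ directly in each sub-case (it telescopes, the sub-cases being determined by where the front and back cancellations stop, constrained by $p$ and $p^{-1}$ being disjoint in $w'$): this puts $v$ in a shape such as $v=f_\sharp\,t^{-1}u_1$ for a factorisation $f=f_\flat f_\sharp$, with $u_0:=t^{-1}u_1 f_\flat^{-1}$ a cyclic permutation of $u$, whence $f*u_0=\hat{\rho}(f_\flat\,v\,f_\flat^{-1})=v$ because $f_\flat v f_\flat^{-1}$ is reduced (Corollary \ref{permCycRedForm}); symmetrically one gets $v=u_0*f$ in the mirror situation.

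Finally, for the identities among relations: in case (\ref{hfh^{-1}u_0}), conjugating $u_0$ back to $u$ produces an equivalence $v\sim\rho(h'fh'^{-1}u)$ to which Lemma \ref{iarAppB} applies; in case (\ref{u_0*fSim}), the equality $v=f*u_0$ with $u_0=\rho(\alpha u\alpha^{-1})$ lets me quote Lemma \ref{baseLemma} in its form with $f=\rho(\beta w\beta^{-1}\alpha u^{-1}\alpha^{-1})$, taking its ``$w$'' to be $v$; and in both cases Remarks \ref{idFrom2} and \ref{basicToo} pass from the identity phrased with $u$ to the one phrased with the cyclic permutation $u_0$. The main obstacle is making the case analysis exhaustive: the delicate points are the forced cyclic reducedness of $u$, the emergence of the periodic structure of $t$ relative to $u$ in the deep-cancellation cases (which is exactly what makes $\rho(t^{-1}ut)$ a cyclic permutation of $u$), and tracking cancellations precisely enough to obtain the literal factorisations $v=hfh^{-1}u_0$ and $v=f*u_0$ rather than mere equalities in $\mathcal{F}(X)$.
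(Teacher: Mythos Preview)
Your proposal is correct and follows a genuinely different organisation than the paper's proof.

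The paper writes $tft^{-1}u = svs^{-1}$ and performs a ten-case Levi--Lemma analysis according to where the two internal bars of $s\,|\,v\,|\,s^{-1}$ fall among the four blocks $t\,|\,f\,|\,t^{-1}\,|\,u$. Six of those cases feed directly into Lemma~\ref{baseLemma} (yielding $v=u_0*f$ or $v=f*u_0$), two give the literal form $v=hfh^{-1}u_0$ and invoke Lemma~\ref{iarAppB}, and two are shown impossible by length count. Your organisation instead splits once, on $|p|<|t|$ versus $|p|\ge|t|$, and exploits an observation the paper does not isolate: in the non-degenerate situation $u$ is forced to be cyclically reduced, so when $|p|\ge|t|$ the matching of $p$ with $p^{-1}$ makes $t$ a prefix of a power of $u^{-1}$, whence $\rho(t^{-1}ut)$ is literally a cyclic permutation $u_0$ of $u$ and $v\sim\hat\rho(t^{-1}utf)=u_0*f$ drops out in one line via Corollary~\ref{cycRedPerm}. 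This is more conceptual and avoids four of the paper's six ``hard'' cases. The trade-off is that you only get $v\sim u_0*f$ this way; to obtain the sharper $v=u_0*f$ or $v=f*u_0$ you still need (as you acknowledge) to locate where the cancellation actually stops inside $f$ or $t^{-1}$, which amounts to recovering the paper's Cases~1--6 in condensed form. For the identity-among-relations claims you cite the same two lemmas as the paper (Lemma~\ref{iarAppB} for the $hfh^{-1}u_0$ form, Lemma~\ref{baseLemma} for the other), together with Remark~\ref{iarBasicConj} to pass between $u$ and $u_0$; this matches the paper's argument, though your phrasing ``pass from the identity phrased with $u$ to the one phrased with $u_0$'' is the reverse of what is needed (the target involves $u$, not $u_0$).
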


\begin{proof} Since $t f t^{-1} u$ is a reduced word and $v = \hat{\rho}(t f t^{-1} u)$ then by (\ref{scope}) of Proposition \ref{summar} there exists a word $s$ such that $t f t^{-1} u = s v s^{-1}$; we will prove the claim by analyzing all possible cases of this word equation. We observe that since $v$ is cyclically reduced then $\hat{\rho}(v) = v$.

Cases 1-6 will be proved by using Lemma \ref{baseLemma}. In particular it is enough to prove that there exist a cyclic permutation $u_0$ of $u$ and words $\alpha$ and $\beta$ such that: 	
\begin{enumerate} [(I)]	
	\item \label{rtl2-O} $u_0 = \rho(\alpha u \alpha^{-1})$;
		
	\item \label{rtl2-I} $f = \rho(\alpha u^{-1} \alpha^{-1} \beta v \beta^{-1})$ or $f = \rho(\beta v \beta^{-1} \alpha u^{-1} \alpha^{-1})$;
		
	\item \label{rtl2-II} $\beta v \beta^{-1}$ is reduced.	
\end{enumerate}

	

\medskip	
	
\textbf{1.}  \hspace{0.1cm}
	\begin{tabular}{|c|c|c|}
		\hline
		\rule{0pt}{2.3ex}
		$\,\,\,\,\,\,\,\,\,\,\,\,\,\,\,  s \,\,\,\,\,\,\,\,\,\,\,\,\,\,$ & $v$ & $s^{-1}$ \\  
		\hline
	\end{tabular}
	
	\hspace{0.66cm}
	\begin{tabular}{|c|c|c|c|}
		\hline 
		\rule{0pt}{2.3ex}
		$t$ & $f$ & $t^{-1}$ & $\,\,\,\,\,\,\,\,\,\, u \,\,\,\,\,\,\,\,\,\,$ \\
		\hline
	\end{tabular}	
	
\medskip	
	
\noindent There exists a word $u_1$ such that $s = t f t^{-1} u_1$ and $u = u_1 v s^{-1}$, so $s^{-1} = u_1^{-1} t f^{-1} t^{-1}$, therefore $u = u_1 v u_1^{-1} t f^{-1} t^{-1}$ from which we derive that $f = \rho(t^{-1} u^{-1} u_1 v u_1^{-1} t)$, so $f = \rho(\alpha u^{-1} \alpha^{-1} \beta v \beta^{-1})$ with $\alpha = t^{-1}$ and $\beta = t^{-1} u_1$, proving (\ref{rtl2-I}).
	
The word $u_0:= t^{-1} u_1 v u_1^{-1} t f^{-1} = \rho(t^{-1} u t)$ is a cyclic permutation of $u$ and $u_0 = \rho(\alpha u \alpha^{-1})$, proving (\ref{rtl2-O}).
	
It remains to prove (\ref{rtl2-II}), i.e., that is $t^{-1} u_1 v u_1^{-1} t$ is a reduced word. This is indeed the case because $t f t^{-1} u$ is a reduced word and 
	$$t f t^{-1} u = t f t^{-1} u_1 v u_1^{-1} t f^{-1} t^{-1},$$
so $t^{-1} u_1 v u_1^{-1} t$ is a subword of it.

\bigskip

\textbf{2.} \hspace{0.1cm}
	\begin{tabular}{|c|c|c|}
		\hline
		\rule{0pt}{2.3ex}
		$\,\,\,\,\,\,\,\, s \,\,\,\,\,\,\,\,\,$ & $\,\,\, v \,\,\,$ & $s^{-1}$ \\  
		\hline
	\end{tabular}
	
	\hspace{0.66cm}
	\begin{tabular}{|c|c|c|c|}
		\hline 
		\rule{0pt}{2.3ex}
		$t$ & $f$ & $t^{-1}$ & $\,\,\,\,\,\,\, u \,\,\,\,\,\,\,$ \\
		\hline
	\end{tabular}	
	
\medskip	
	
\noindent There exist words $t_1, v_1, v_2$ such that $s = t f t_1^{-1}$, $t^{-1} = t_1^{-1} v_1$, $v = v_1 v_2$, $u = v_2 s^{-1}$. So $t = v_1^{-1} t_1$ and $s^{-1} = t_1 f^{-1} t^{-1} = t_1 f^{-1} t_1^{-1} v_1$, thus $u = v_2 t_1 f^{-1} t_1^{-1} v_1$ and therefore $u^{-1} = v_1^{-1} t_1 f t_1^{-1} v_2^{-1}$, which implies that 
	$$f = \rho(t_1^{-1} v_1 u^{-1} v_2 t_1) = \rho(t_1^{-1} v_1 v_2 v_2^{-1} u^{-1} v_2 t_1) =$$ 
	$$\rho(t_1^{-1} v v_2^{-1} u^{-1} v_2 t_1) = \rho(\beta v \beta^{-1} \alpha u^{-1} \alpha^{-1}),$$ 
with $\alpha = t_1^{-1} v_2^{-1}$ and $\beta = t_1^{-1}$, proving (\ref{rtl2-I}).
	
The word $u_0:= f^{-1} t_1^{-1} v_1 v_2 t_1 = \rho(t_1^{-1} v_2^{-1} u v_2 t_1)$ is a cyclic permutation of $u$ and $u_0 = \rho(\alpha u^{-1} \alpha^{-1})$, proving (\ref{rtl2-O}).

It remains to prove (\ref{rtl2-II}), i.e., that $t_1^{-1} v t_1$ is a reduced word. This is indeed the case because $s v s^{-1}$ is a reduced word and $s v s^{-1} = t f t_1^{-1} v t_1 f^{-1} t^{-1}$, so $t_1^{-1} v t_1$ is a subword of it.

\bigskip

\textbf{3.} \hspace{0.1cm}
	\begin{tabular}{|c|c|c|}
		\hline
		\rule{0pt}{2.3ex}
		$\,\,\,\,\,\,\,  s \,\,\,\,\,$ & $v$ & $s^{-1}$ \\  
		\hline
	\end{tabular}
	
	\hspace{0.66cm}
	\begin{tabular}{|c|c|c|c|}
		\hline 
		\rule{0pt}{2.3ex}
		$t$ & $f$ & $\,\, t^{-1} \,$ & $u$ \\
		\hline
	\end{tabular}	
	
\medskip	
	
\noindent There exist words $t_1, t_2$ such that $s = t f t_2^{-1}$, $t^{-1} = t_2^{-1} v t_1^{-1}$, $s^{-1} = t_1^{-1} u$. So $s^{-1} = t_2 f^{-1} t^{-1} = t_2 f^{-1} t_2^{-1} v t_1^{-1}$ and since $s^{-1} = t_1^{-1} u$ then $t_2 f^{-1} t_2^{-1} v t_1^{-1} = t_1^{-1} u$, therefore by (\ref{remCycPerm}) of Proposition \ref{summar} $t_2 f^{-1} t_2^{-1} v$ is a cyclic permutation of $u$.

The word $u_0 := f^{-1} t_2^{-1} v t_2$ is a cyclic permutation of $t_2 f^{-1} t_2^{-1} v$ and thus of $u$, then there exists a word $\alpha$ such that $u_0 = \rho(\alpha u \alpha^{-1})$, proving (\ref{rtl2-O}). We have that $f = \rho(t_2^{-1} v t_2 u_0^{-1})$, so by setting $\beta := t_2^{-1}$ we have that $f = \rho(\beta v \beta^{-1} \alpha u^{-1} \alpha^{-1})$, proving (\ref{rtl2-I}).

It remains to prove (\ref{rtl2-II}), i.e., that $t_2^{-1} v t_2$ is a reduced word. This is indeed the case because $s v s^{-1}$ is a reduced word and $s v s^{-1} = t f t_2^{-1} v t_2 f^{-1} t^{-1}$, so $t_2^{-1} v t_2$ is a subword of it.

\bigskip

\textbf{4.} \hspace{0.1cm}
	\begin{tabular}{|c|c|c|}
		\hline
		\rule{0pt}{2.3ex}
		$\,\, s \,\,\, $ & $\,\,\,\,\,\, v \,\,\,\,\,\,$ & $s^{-1}$ \\  
		\hline
	\end{tabular}
	
	\hspace{0.66cm}
	\begin{tabular}{|c|c|c|c|}
		\hline 
		\rule{0pt}{2.3ex}
		$t$ & $f$ & $t^{-1}$ & $\,\,\,\, u \,\,\,\,$ \\
		\hline
	\end{tabular}	
	
\medskip	
	
\noindent There exist words $f_1, f_2, u_1$ such that $s = t f_1$, $f = f_1 f_2$, $v = f_2 t^{-1} u_1$, $u = u_1 s^{-1}$. So $s^{-1} = f_1^{-1} t^{-1}$ and thus $u = u_1 f_1^{-1} t^{-1}$. This implies that $u^{-1} = t f_1 u_1^{-1}$ and then $f_1 = \rho(t^{-1} u^{-1} u_1)$. Moreover $f_2 = \rho(v u_1^{-1} t)$, therefore since $\rho(t^{-1} u^{-1} u_1) = f_1$ then
	$$f = \rho(t^{-1} u^{-1} u_1 v u_1^{-1} t) = \rho(t^{-1} u^{-1} u_1 v u_1^{-1} u t t^{-1} u^{-1} t) =$$
	$$\rho(f_1 v f_1^{-1} t^{-1} u^{-1} t)  = \rho(\beta v \beta^{-1} \alpha u^{-1} \alpha^{-1}),$$
with $\alpha = t^{-1}$ and $\beta = f_1$, proving (\ref{rtl2-I}).
	
The word $u_0:= t^{-1} u_1 f_1^{-1} = \rho(t^{-1} u t)$ is a cyclic permutation of $u$ and $u_0 = \rho(\alpha u^{-1} \alpha^{-1})$, proving (\ref{rtl2-O}).
	
It remains to prove (\ref{rtl2-II}), that is that $f_1 v f_1^{-1}$ is a reduced word. This is indeed the case because $s v s^{-1}$ is a reduced word and $s v s^{-1} = t f_1 v f_1^{-1} t^{-1}$, so $f_1 v f_1^{-1}$ is a subword of it.

\bigskip

\textbf{5.} \hspace{0.1cm}
	\begin{tabular}{|c|c|c|}
		\hline
		\rule{0pt}{2.3ex}
		$\,\, s \,\,$ & $\,\, v \,$ & $\,\, s^{-1} \,\,$ \\  
		\hline
	\end{tabular}
	
	\hspace{0.66cm}
	\begin{tabular}{|c|c|c|c|}
		\hline 
		\rule{0pt}{2.3ex}
		$t$ & $f$ & $t^{-1}$ & $\, u \,$ \\
		\hline
	\end{tabular}	
	
\medskip
	
\noindent There exist words $f_1, f_2, t_1, t_2$ such that $s = t f_1$, $f = f_1 f_2$, $v = f_2 t_2^{-1}$, $t^{-1} = t_2^{-1} t_1^{-1}$, $s^{-1} = t_1^{-1} u$.
	
So $s^{-1} = f_1^{-1} t^{-1} = f_1^{-1} t_2^{-1} t_1^{-1}$, but $s^{-1} = t_1^{-1} u$, then $f_1^{-1} t_2^{-1} t_1^{-1} = t_1^{-1} u$ and by (\ref{remCycPerm}) of Proposition \ref{summar} $f_1^{-1} t_2^{-1}$ is a cyclic permutation of $u$.

The word $u_0 := t_2^{-1} f_1^{-1}$ is a cyclic permutation of $f_1^{-1} t_2^{-1}$ and thus of $u$. Therefore there exists a word $\alpha$ such that $u_0 = \rho(\alpha u \alpha^{-1})$, proving (\ref{rtl2-O}).
	
We have that $f_1 = \rho(u_0^{-1} t_2^{-1})$ and $f_2 = \rho(v t_2)$, so since $\rho(t_2 u_0) = f_1$ then
	$$f = \rho(u_0^{-1} t_2^{-1} v t_2) = \rho(u_0^{-1} t_2^{-1} v t_2 u_0 u_0^{-1}) = \rho(f_1 v f_1^{-1} u_0^{-1}).$$ 
If we set $\beta := f_1$ then $f = \rho(\beta v \beta^{-1} \alpha u^{-1} \alpha^{-1})$, proving (\ref{rtl2-I}). 	
	
It remains to prove (\ref{rtl2-II}), i.e., that is that $f_1 v f_1^{-1}$ is a reduced word. This is indeed the case because $s v s^{-1}$ is a reduced word and $s v s^{-1} = t f_1 v f_1^{-1} t^{-1}$, so $f_1 v f_1^{-1}$ is a subword of it.

\bigskip

\textbf{6.}  \hspace{0.1cm}
	\begin{tabular}{|c|c|c|}
		\hline
		\rule{0pt}{2.3ex}
		$\,\, s \,\,$ & $v$ & $\,\,\,\,\,\,\, s^{-1} \,\,\,\,\,\,\,$ \\  
		\hline
	\end{tabular}
	
	\hspace{0.66cm}
	\begin{tabular}{|c|c|c|c|}
		\hline 
		\rule{0pt}{2.3ex}
		$t$ & $\,\,\,\,\, f \,\,\,\,$ & $t^{-1}$ & $u$ \\
		\hline
	\end{tabular}	
	
\medskip
	
\noindent There exist words $f_1, f_2$ such that $s = t f_1$, $f = f_1 v f_2$, $s^{-1} = f_2 t^{-1} u$, which implies that $s = u^{-1} t f_2^{-1}$. We have that $s^{-1} = f_1^{-1} t^{-1}$, but $s^{-1} = f_2 t^{-1} u$, then 
	\begin{equation} \label{A6.0} f_1^{-1} t^{-1} = f_2 t^{-1} u. \end{equation}
	
This implies that $|f_1| + |t| = |f_2| + |t| + |u|$ and thus that $|f_1| = |f_2| + |u|$. Since $f_1^{-1}$ and $f_2$ are prefixes of the same word and since $|f_2| \leq |f_1|$ then $f_2$ is a prefix of $f_1^{-1}$. Therefore there exists a word $x$ such that $f_1^{-1} = f_2 x$. From (\ref{A6.0}) we have that $f_2 x t^{-1} = f_2 t^{-1} u$ and thus $x t^{-1} = t^{-1} u$, so by (\ref{remCycPerm}) of Proposition \ref{summar} $x$ is a cyclic permutation of $u$.
	
Let us set $u_0 := x$. We have that there exists a word $\alpha$ such that $u_0 = \rho(\alpha u \alpha^{-1})$, proving (\ref{rtl2-O}). We have also that $f_1 = u_0^{-1} f_2^{-1}$, thus $f = u_0^{-1} f_2^{-1} v f_2$. Let us set $\beta := f_2^{-1}$. Then $\beta v \beta^{-1}$ is a reduced word because it is a subword of $f$ which is reduced, proving (\ref{rtl2-II}). Also $f = \rho(\alpha u^{-1} \alpha^{-1} \beta v \beta^{-1})$, proving (\ref{rtl2-I}). 	
	

\bigskip

\textbf{7.}  \hspace{0.1cm}
	\begin{tabular}{|c|c|c|}
		\hline
		\rule{0pt}{2.3ex}
		$s$ & $\,\,\,\,\,\,\,\,\,\,\,\, v \,\,\,\,\,\,\,\,\,\,\,$ & $s^{-1}$ \\  
		\hline
	\end{tabular}
	
	\hspace{0.66cm}
	\begin{tabular}{|c|c|c|c|}
		\hline 
		\rule{0pt}{2.3ex}
		$\,\, t \,\,\,$ & $f$ & $t^{-1}$ & $\,\,\,\,\, u \,\,\,\,$ \\
		\hline
	\end{tabular}	
	
\medskip
	
\noindent There exist words $h, v_1$ such that $t = s h$, $v = h f t^{-1} v_1$, $u = v_1 s^{-1}$. So $t^{-1} = h^{-1} s^{-1}$ and then $v = h f h^{-1} s^{-1} v_1$. The word $u_0 := s^{-1} v_1$ is a cyclic permutation of $u$ and thus we have that $v = h f h^{-1} u_0$. If $h$ is non-empty we have that (\ref{hfh^{-1}u_0}) holds, otherwise $v = f u_0$ and since $v$ is cyclically reduced then $v = f * u_0$ and (\ref{u_0*fSim}) holds.
	
By Lemma \ref{iarAppB} the identity among relations involving $u_0$, $v$, $u_0^{-1}$, $v^{-1}$ is basic and by Remark \ref{iarBasicConj} it is also basic when involving $u$, $v$, $u^{-1}$, $v^{-1}$. 

\bigskip

\textbf{8.}	\hspace{0.1cm}
	\begin{tabular}{|c|c|c|}
		\hline
		\rule{0pt}{2.3ex}
		$s$ & $\,\,\,\, v \,\,\,\,$ & $\,\,\, s^{-1} \,\,$ \\  
		\hline
	\end{tabular}
	
	\hspace{0.66cm}
	\begin{tabular}{|c|c|c|c|}
		\hline 
		\rule{0pt}{2.3ex}
		$\,\, t \,\,$ & $f$ & $t^{-1}$ & $u$ \\
		\hline
	\end{tabular}

\medskip

\noindent There exist words $h, v_1, s_1$ such that $t = s h$, $v = h f v_1$, $t^{-1} = v_1 s_1^{-1}$, $s^{-1} = s_1^{-1} u$. So $t^{-1} = h^{-1} s^{-1} = h^{-1} s_1^{-1} u$, but $t^{-1} = v_1 s_1^{-1}$, therefore 
	\begin{equation} \label{A.1} h^{-1} s_1^{-1} u = v_1 s_1^{-1}. \end{equation}
This implies that $|h| + |s_1| + |u| = |v_1| + |s_1|$ and then $|h| + |u| = |v_1|$. Since $h^{-1}$ and $v_1$ are prefixes of the same word and since $|h| \leq |v_1|$ then $h^{-1}$ is a prefix of $v_1$, therefore there exists a word $x$ such that 
	\begin{equation} \label{A.2} v_1 = h^{-1} x, \end{equation}
which implies that 
	\begin{equation} \label{A.3} v = h f h^{-1} x. \end{equation}
	
From (\ref{A.1}) and (\ref{A.2}) we have that $h^{-1} s_1^{-1} u = h^{-1} x s_1^{-1}$ and thus $s_1^{-1} u = x s_1^{-1}$, so by (\ref{remCycPerm}) of Proposition \ref{summar} we have that $x$ is a cyclic permutation of $u$. If we set $u_0 := x$ then from (\ref{A.3}) we have that $v = h f h^{-1} u_0$, and with the same reasoning as in case 7, we have that either (\ref{hfh^{-1}u_0}) or (\ref{u_0*fSim}) hold.
	
By Lemma \ref{iarAppB} the identity among relations involving $u_0$, $v$, $u_0^{-1}$, $v^{-1}$ is basic and by Remark \ref{iarBasicConj} it is also basic when involving $u$, $v$, $u^{-1}$, $v^{-1}$. 
	

\bigskip

\textbf{9.}  \hspace{0.1cm}
	\begin{tabular}{|c|c|c|}
		\hline
		\rule{0pt}{2.3ex}
		$s$ & $v $ & $\,\,\,\,\,\,\, s^{-1} \,\,\,\,\,\,$ \\  
		\hline
	\end{tabular}
	
	\hspace{0.66cm}
	\begin{tabular}{|c|c|c|c|}
		\hline 
		\rule{0pt}{2.3ex}
		$\,\, t \,\,$ & $f$ & $t^{-1}$ & $u$ \\
		\hline
	\end{tabular}	
	
\medskip	
	
\noindent This case is impossible because $|t| \geq |s|$ and $|s| \geq |t| + |u|$, which implies that $|t| \geq |t| + |u|$. Thus $|u| = 0$ and therefore $u = 1$ and this is excluded by hypothesis.

\bigskip

\textbf{10.} \hspace{0.1cm}
	\begin{tabular}{|c|c|c|}
		\hline
		\rule{0pt}{2.3ex}
		$s$ & $v $ & $\,\,\,\,\,\,\,\,\,\,\, s^{-1} \,\,\,\,\,\,\,\,\,\,\,$ \\  
		\hline
	\end{tabular}
	
	\hspace{0.88cm}
	\begin{tabular}{|c|c|c|c|}
		\hline 
		\rule{0pt}{2.3ex}
		$\,\,\,\,\,\,\, t \,\,\,\,\,\,$ & $f$ & $t^{-1}$ & $u$ \\
		\hline
	\end{tabular}	
	
\medskip	
	
\noindent This case is impossible because $|t| \geq |s| + |w|$ and  $|s| \geq |f| + |t| + |u|$, implying that  $|t| \geq |f| + |t| + |u| + |w|$, therefore $|f| + |u| + |w| = 0$, that is $f$, $u$ and $v$ should be empty words, which contradicts the hypothesis that $u \neq 1$.	
\end{proof}

\begin{proposition} \label{tecLemma2a} Let $f, t, u$ be words such that $u t f t^{-1}$ is a reduced word. Let us set $v := \hat{\rho}(u t f t^{-1})$. Then there exists a cyclic permutation $u_0$ of $u$ such that either
	\begin{equation} \label{u_0*fSim.a} v \sim u_0*f \end{equation}	
(in particular $v = u_0*f$ or $v = f*u_0$) or there exists a non-empty word $h$ such that 
	\begin{equation} \label{u_0hfh^{-1}} v = u_0 h f h^{-1}.\end{equation} 	
	
Finally the identities among relations involving $u, v, u^{-1}, v^{-1}$ that by Remark \ref{idFrom} follow from (\ref{u_0*fSim.a}) and (\ref{u_0hfh^{-1}}) are basic.
\end{proposition}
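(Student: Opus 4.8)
The plan is to deduce this from Proposition \ref{tecLemma2} by reversing every word in sight and transporting the conclusion back through the reversion identities collected in Proposition \ref{summar}. By part (\ref{reverse2}) of Proposition \ref{summar}, the reverse of $u t f t^{-1}$ is the word $\underline{t}^{-1}\,\underline{f}\,\underline{t}\,\underline{u}$, which has exactly the shape $t'f'{t'}^{-1}u'$ to which Proposition \ref{tecLemma2} applies, with $\underline{t}^{-1},\underline{f},\underline{u}$ playing the roles of $t,f,u$. Since reversion visibly preserves reducedness (a word has a subword $xx^{-1}$ iff its reverse has a subword $x^{-1}x$) and $u t f t^{-1}$ is reduced by hypothesis, the word $\underline{t}^{-1}\,\underline{f}\,\underline{t}\,\underline{u}$ is reduced, so Proposition \ref{tecLemma2} is applicable.

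Applying Proposition \ref{tecLemma2} to $\underline{t}^{-1},\underline{f},\underline{u}$ yields a cyclic permutation $u_0'$ of $\underline{u}$ together with either the equivalence $\hat{\rho}(\underline{t}^{-1}\,\underline{f}\,\underline{t}\,\underline{u}) \sim u_0' * \underline{f}$ (with the refinement that the left-hand side equals $u_0' * \underline{f}$ or $\underline{f} * u_0'$), or a non-empty word $h'$ with $\hat{\rho}(\underline{t}^{-1}\,\underline{f}\,\underline{t}\,\underline{u}) = h'\,\underline{f}\,(h')^{-1} u_0'$; in either subcase the associated identity among relations in $\underline{u},\underline{v},\underline{u}^{-1},\underline{v}^{-1}$ is basic. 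By part (\ref{revCycRedForm}) of Proposition \ref{summar} we have $\hat{\rho}(\underline{t}^{-1}\,\underline{f}\,\underline{t}\,\underline{u}) = \hat{\rho}(\underline{u t f t^{-1}}) = \underline{v}$, so these are statements about $\underline{v}$.

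It remains to reverse back. Set $u_0 := \underline{u_0'}$, a cyclic permutation of $u$ by part (\ref{reversesim}) of Proposition \ref{summar}. In the first case, applying part (\ref{reversesim}) to $\underline{v}\sim u_0'*\underline{f}$ and then part (\ref{revCRP}) of Proposition \ref{summar} gives $v \sim \underline{u_0'*\underline{f}} = f * u_0$, hence $v \sim u_0 * f$ by Proposition \ref{puzo}; moreover reversing the refined equalities and using part (\ref{revCRP}) turns $\underline{v}=u_0'*\underline{f}$ into $v=f*u_0$ and $\underline{v}=\underline{f}*u_0'$ into $v=u_0*f$, which is the parenthetical clause of (\ref{u_0*fSim.a}). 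In the second case, part (\ref{reverse2}) of Proposition \ref{summar} turns $\underline{v}=h'\,\underline{f}\,(h')^{-1}u_0'$ into $v = \underline{u_0'}\,\underline{h'}^{-1}\,f\,\underline{h'}$; putting $h := \underline{h'}^{-1}$ (non-empty because $h'$ is) this reads $v = u_0 h f h^{-1}$, which is (\ref{u_0hfh^{-1}}). Finally, the equivalences (\ref{u_0*fSim.a}) and (\ref{u_0hfh^{-1}}) are, after reversion, precisely the equivalences produced by Proposition \ref{tecLemma2} for the reversed data; so by Remark \ref{idRev}, which says that the identity among relations following from an equivalence is basic iff the one following from the reversed equivalence is basic, the identities in $u,v,u^{-1},v^{-1}$ following from (\ref{u_0*fSim.a}) and (\ref{u_0hfh^{-1}}) are basic.

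I expect the only real difficulty to be bookkeeping rather than any substantive step: one must keep the two order-reversing operations (inversion and reversion) strictly separate, check that after reversion the $u_0$-block and the $hfh^{-1}$-block come out in the order of (\ref{u_0hfh^{-1}}) and not of the ``opposite'' version in Proposition \ref{tecLemma2}, and confirm that the alternative ``$v=u_0*f$ or $v=f*u_0$'' is stable under reversion. The transfer of basic-ness via Remark \ref{idRev} also requires that the operation-sequences underlying the two equivalences correspond under reversion, but this is exactly what the dictionary in Proposition \ref{summar} together with Remark \ref{idRev} provides, so it should go through routinely.
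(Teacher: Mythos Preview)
Your proposal is correct and follows essentially the same approach as the paper's own proof: reverse the word $utft^{-1}$, apply Proposition \ref{tecLemma2} to the reversed data, and transport the conclusion back via the reversion identities in Proposition \ref{summar} together with Remark \ref{idRev} for basic-ness. The only differences are notational (the paper writes the cyclic permutation of $\underline{u}$ directly as $\underline{u_0}$ rather than introducing an auxiliary $u_0'$, and parametrizes the non-empty word so that the reversed equation already reads $\underline{v}=\underline{h}^{-1}\underline{f}\,\underline{h}\,\underline{u_0}$), and you add the explicit remark that reversion preserves reducedness, which the paper leaves implicit.
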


\begin{proof} By (\ref{revCycRedForm}) and (\ref{reverse2}) of Proposition \ref{summar} we have that $\underline{v } = \hat{\rho}(\underline{t}^{-1} \, \underline{f} \, \underline{t} \, \underline{u})$. 
	
By Proposition \ref{tecLemma2} there exists a cyclic permutation $\underline{u_0}$ of $\underline{u}$ such that either 
	\begin{equation} \label{yalApp} \underline{v } \sim \underline{u_0}*\underline{f} \end{equation}
(in particular $\underline{v } = \underline{u_0}*\underline{f}$ or $\underline{v } = \underline{f}*\underline{u_0}$) or there exists a non-empty word $h$ such that 
	\begin{equation} \label{yalApp2} \underline{v } = \underline{h}^{-1} \, \underline{f} \, \underline{h} \, \underline{u_0}. \end{equation} 	
Also by Proposition \ref{tecLemma2} the identities among relations involving $\underline{u}, \underline{v }, \underline{u}^{-1}, \underline{v }^{-1}$ that by Remark \ref{idFrom} follow from (\ref{yalApp}) and from (\ref{yalApp2}) are basic.
	
Let $\underline{v } = \underline{u_0}*\underline{f}$; we have that 
	$$\underline{u_0}*\underline{f} = \hat{\rho}(\underline{u_0} \, \underline{f}) = \hat{\rho}(\underline{f u_0}) = \underline{\hat{\rho}(f u_0)} = \underline{f *u_0},$$
where the second and third equalities follow respectively from (\ref{reverse2}) and (\ref{revCycRedForm}) of Proposition \ref{summar}. This implies that $v = u_0*f$.

In the same way we prove that if $\underline{v } = \underline{f}*\underline{u_0}$ then $v = f*u_0$. Thus in these two cases we have that $v \sim u_0*f$ by Proposition \ref{puzo}, thus (\ref{u_0*fSim.a}) holds.

Let there exist a non-empty word $h$ such that $\underline{v } = \underline{h}^{-1} \, \underline{f} \, \underline{h} \, \underline{u_0}$; then $v = u_0 h f h^{-1}$ by (\ref{reverse2}) of Proposition \ref{summar}, thus (\ref{u_0hfh^{-1}}) holds.

Finally the identities among relations that follow from (\ref{u_0*fSim.a}) or (\ref{u_0hfh^{-1}}) are basic by Remark \ref{idRev}.
\end{proof}

\begin{remark} \label{remAftTecLem} \rm In (\ref{hfh^{-1}u_0}) of Proposition \ref{tecLemma2} and in (\ref{u_0hfh^{-1}}) of Proposition \ref{tecLemma2a} the word $u_0$ is reduced because it is a subword of $v$, which is reduced. Since $u_0$ is a cyclic permutation of $u$, then either $u_0 = u$ or $u$ is cyclically reduced.
\end{remark}

\begin{lemma} \label{tecLemma2b} Let $f, t, u_1, u_2$ be words such that $u_1 \neq 1$ and $u_1 t f t^{-1} u_2$ is a reduced word. Let us set $u := u_1 u_2$ and $v := \hat{\rho}(u_2 u_1 t f t^{-1})$. Then there exists a word $u_0$ which is the reduced form of a cyclic permutation of $u$ such that either
	\begin{equation} \label{u_0*fSim.b} v \sim u_0*f \end{equation}	
or there exists a non-empty word $h$ such that 
	\begin{equation} \label{u_0hfh^{-1}.b} v \sim u_0 h f h^{-1}.\end{equation} 	
	
Finally the identities among relations involving $u, v, u^{-1}, v^{-1}$ that by Remark \ref{idFrom} follow from (\ref{u_0*fSim.b}) and (\ref{u_0hfh^{-1}.b}) are basic.
\end{lemma}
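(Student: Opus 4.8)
The plan is to reduce the statement to Proposition~\ref{tecLemma2a} together with Lemmas~\ref{baseLemma} and~\ref{iarAppB}, the essential difficulty being that, after the cyclic rotation carrying $u=u_1u_2$ to $u_2u_1$, the word $u_2u_1tft^{-1}$ need not be reduced although $u_1tft^{-1}u_2$ is. First note that $u_1,u_2,t,f$ are reduced, being subwords of the reduced word $u_1tft^{-1}u_2$, and put $u_0:=\rho(u_2u_1)$, which by definition is the reduced form of a cyclic permutation of $u$. Since $\hat{\rho}$ depends only on the reduced form of its argument and $\rho(u_2u_1tft^{-1})=\rho\bigl(u_0\,tft^{-1}\bigr)$, we have $v=\hat{\rho}\bigl(u_0\,tft^{-1}\bigr)$, so the whole statement concerns the single word $\hat{\rho}(u_0\,tft^{-1})$.

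Next I would record that it suffices to produce a word $u'$ which is the reduced form of a cyclic permutation of $u$ such that either $v\sim u'*f$, or $v\sim u'hfh^{-1}$ for some non-empty $h$. Indeed, once $v$ has such a form, applying Lemma~\ref{baseLemma} with its ``$w$'' taken to be $v$ and its ``$u$'' taken to be our $u$ (respectively Lemma~\ref{iarAppB} with ``$w$'' $=v$ and ``$u$'' $=u'$) shows that the identity among relations involving $u',v,u'^{-1},v^{-1}$ is basic; and because $u'$ is the reduced form of a conjugate of $u$, Remarks~\ref{iarConjSome}, \ref{iarBasicConj} and~\ref{basicToo} promote this to a basic identity involving $u,v,u^{-1},v^{-1}$. (Remark~\ref{idFrom2} keeps track of the conjugating words through the rotations used below.)

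It remains to exhibit $u'$ and, when needed, $h$. If $u_0\,tft^{-1}$ happens to be a reduced word, Proposition~\ref{tecLemma2a} applied with its ``$u$'' equal to $u_0$ does this immediately: it gives a cyclic permutation $u_0'$ of $u_0$ with $v\sim u_0'*f$ or $v=u_0'hfh^{-1}$, and a cyclic permutation of $u_0=\rho(u_2u_1)$ is, by~(\ref{cpCanc}) of Proposition~\ref{summar}, the reduced form of a cyclic permutation of $u_2u_1$, hence of $u$. If $u_0\,tft^{-1}$ is not reduced, then, since $tft^{-1}$ is reduced, all cancellation takes place at the $u_0\mid t$ junction, and I would run a Levi's-Lemma analysis (Remark~\ref{LeviLemma}) according to whether the maximal cancelled prefix of $tft^{-1}$ ends inside $t$, inside $f$, or inside $t^{-1}$. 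In the first case one is reduced, after one rotation and Corollary~\ref{cycRedPerm}, to $v\sim\hat{\rho}(U\,t'ft'^{-1})$ with $U$ again the reduced form of a cyclic permutation of $u$ and $t'$ a proper suffix of $t$, so one iterates — the process terminating because $|t|$ strictly decreases — or applies Proposition~\ref{tecLemma2a}; in the other two cases the same rotation exhibits a conjugate of $f$ directly and places $v$ in the form $U'*f$ (or $U'hfh^{-1}$) with $U'$ a reduced cyclic permutation of $u$.

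The step I expect to be the main obstacle is the branch in which the cancellation runs through $u_1$, through $t$ and into (or entirely past) $f$: there $v$ no longer visibly contains a conjugate of $f$, so writing $v\sim u'*f$ (or $v\sim u'hfh^{-1}$) with $u'$ a bona fide reduced cyclic permutation of $u$ — rather than some unrelated word — requires a careful, though finite, sub-case analysis on the positions of the block boundaries, after which one must still verify that the conjugating factors are aligned so that the resulting identity among relations satisfies the defining condition of Definition~\ref{defBasIAR}. Securing that alignment is the delicate point; the rest is the same sort of reduced-word bookkeeping already carried out in Section~\ref{sec1} and Appendix~\ref{secB}.
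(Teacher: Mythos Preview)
Your approach coincides with the paper's: both split on how far the cancellation in $u_2u_1tft^{-1}$ propagates past the $u_2\mid u_1$ junction (stopping inside $u_1$, inside $t$, inside $f$, or inside $t^{-1}$), invoke Proposition~\ref{tecLemma2a} in the first case, and recurse in the second; the paper inducts on $|u_2|$ where you iterate on $|t|$, but these are interchangeable. One correction worth making: your worry is misplaced, since the branches where cancellation runs into or past $f$ are in fact the \emph{easy} ones --- the paper disposes of each by a single application of Lemma~\ref{baseLemma} (taking $\alpha=t^{-1}u_1^{-1}$, $\beta=t^{-1}$, and checking $f=\rho(\alpha u^{-1}\alpha^{-1}\beta v_0\beta^{-1})$ directly) rather than any sub-case analysis, so the ``careful, though finite, sub-case analysis'' you anticipate there never materializes; the only genuinely recursive case is cancellation ending strictly inside $t$, which you handle correctly.
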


\begin{proof} We prove the result by induction on the length of $|u_2|$: if $|u_2| = 0$ the claim follows from Proposition \ref{tecLemma2a}, so we can assume that the claim is true for every $u_2'$ such that $|u_2'| < |u_2|$.

Let $v_0 := \rho(u_2 u_1 t f t^{-1})$, that is $v := \hat{\rho}(v_0)$. Then there exist words $u_3, u_4, a, t_1, t_2, f_1, f_2$ such that one of the following four cases holds: \begin{enumerate}
	\item $u_2 = u_3 a$, $u_1 = a^{-1} u_4$, $v_0 = u_3 u_4 t f t^{-1}$;

	\item $t = t_1 t_2$, $u_2 = u_3 t_1^{-1} u_1^{-1}$, $v_0 = u_3 t_2 f t^{-1}$;

	\item $f = f_1 f_2$, $u_2 = u_3 f_1^{-1} t^{-1} u_1^{-1}$, $v_0 = u_3 f_2 t^{-1}$;

	\item $t = t_1 t_2$, $u_2 = u_3 t_2 f^{-1} t^{-1} u_1^{-1}$, $v_0 = u_3 t_1^{-1}$.
\end{enumerate}

Let us prove the claim in the four cases.

\smallskip

1. Let us set $u' := u_3 u_4$, that is $v_0 = u' t f t^{-1}$. By applying Proposition \ref{tecLemma2a} to the reduced word $u' t f t^{-1}$ we have that there exists a cyclic permutation $u_0$ of $u'$ such that either $v \sim u_0*f$ or there exists a non-empty word $h$ such that $v = u_0 h f h^{-1}$; in the latter case, by Remark \ref{remAftTecLem} either $u_0 = u'$ or $u'$ is cyclically reduced. Finally the identities among relations involving $u', v, (u')^{-1}, v^{-1}$ that by Remark \ref{idFrom} follow from them are basic. By Remark \ref{iarBasicConj} these identities, considered as identities involving $u, v, u^{-1}, v^{-1}$ are basic too.

We have that $u = a^{-1} u_4 u_3 a$. Since $u_0$ is a cyclic permutation of $u'$, two cases are possible: either there exist words $u_3', u_3''$ such that $u_3 = u_3' u_3''$ and $u_0 = u_3'' u_4 u_3'$ or there exist words $u_4', u_4''$ such that $u_4 = u_4' u_4''$ and $u_0 = u_4'' u_3 u_4'$. In particular in the first case $\rho(u_0) = \rho(u_3'' a a^{-1} u_4 u_3')$ and in the second case $\rho(u_0) = \rho(u_4'' u_3 a a^{-1} u_4')$, so in both cases $\rho(u_0)$ is the reduced form of a cyclic permutation of $u$.

Let us suppose that $v \sim u_0*f$; then since $u_0*f = \rho(u_0) * f$, by replacing $u_0$ with $\rho(u_0)$ the claim is proved. Now let us suppose that $v = u_0 h f h^{-1}$. If $u_0 = u'$ then $u_0$ is the reduced form of $u_3 a a^{-1} u_4$, which is a cyclic permutation of $u$, proving the claim. If $u'$ is cyclically reduced then $u_0$ is reduced, thus $\rho(u_0) = u_0$ and by what seen above it is the reduced form of a cyclic permutation of $u$.

\smallskip

2. We have that $u = u_1 u_3 t_1^{-1} u_1^{-1}$ and $v_0 = u_3 t_2 f t_2^{-1} t_1^{-1}$. Since $u_2 = u_3 t_1^{-1} u_1^{-1}$ and since we have assumed that $u_1 \neq 1$, then $|t_1^{-1}| < |u_2|$. Let us set $v' := \hat{\rho}(t_1^{-1} u_3 t_2 f t_2^{-1})$ and $u' := t_1^{-1} u_3$; by induction hypothesis there exists a word $u'_0$ which is the reduced form of a cyclic permutation of $u'$ such that either $v' \sim u'_0*f$ or there exists a non-empty word $h$ such that $v' \sim u'_0 h f h^{-1}$. Moreover the identities among relations involving $u', v', (u')^{-1}, (v')^{-1}$ that by Remark \ref{idFrom} follow from them are basic.

By Corollary \ref{cycRedPerm} we have that $v'$ is a cyclic permutation of $v$, so $v \sim u'_0*f$ or there exists a non-empty word $h$ such that $v \sim u'_0 h f h^{-1}$. By Remark \ref{basicToo} the identities among relations involving $u', v, (u')^{-1}, v^{-1}$ that by Remark \ref{idFrom} follow from them are basic. By Remark \ref{iarBasicConj} these identities, considered as identities involving $u, v, u^{-1}, v^{-1}$ are basic too.

It remains to prove that $u_0'$ is the reduced form of a cyclic permutation of $u$. Since $u'_0$ is the reduced form of a cyclic permutation of $u'$ and since $u' = t_1^{-1} u_3$ then there exist words $x, y$ such that either $t_1^{-1} = x y$ and $u'_0 = y u_3 x$ or $u_3 = x y$ and $u'_0 = y t_1^{-1} x$. In the first case $u = u_1 u_3 x y u_1^{-1}$, in the second case $u = u_1 x y t_1^{-1} u_1^{-1}$. In both cases $u_0'$ is the reduced form of a cyclic permutation of $u$ and this completes the proof.

\smallskip

3. We have that $u = u_1 u_3 f_1^{-1} t^{-1} u_1^{-1}$. Let us set $\alpha := t^{-1} u_1^{-1}$, $u_0 := \rho(\alpha u \alpha^{-1})$ and $\beta := t^{-1}$. Then $u_0 = \rho(t^{-1} u_3 f_1^{-1})$ and 
	$$\rho(\alpha u^{-1} \alpha^{-1} \beta v_0 \beta^{-1}) = \rho(f_1 u_3^{-1} t t^{-1} u_3 f_2 t^{-1} t) = f_1 f_2 = f.$$
Since $u_0$ is the reduced form of a cyclic permutation of $u$, the claim follows from Lemma \ref{baseLemma}.


\smallskip

4. We have that $u = u_1 u_3 t_2 f^{-1} t^{-1} u_1^{-1}$. Let us set $\alpha := t^{-1} u_1^{-1}$, $u_0 := \rho(\alpha u \alpha^{-1})$ and $\beta := t^{-1}$. Then $u_0 = \rho(t^{-1} u_3 t_2 f^{-1})$ and 
	$$\rho(\alpha u^{-1} \alpha^{-1} \beta v_0 \beta^{-1}) = \rho(f t_2^{-1} u_3^{-1} t t^{-1} u_3 t_1^{-1} t_1 t_2) = f.$$
Since $u_0$ is the reduced form of a cyclic permutation of $u$, the claim follows from Lemma \ref{baseLemma}. \end{proof}

\begin{proposition} \label{tecLemma2c} Let $f, t, u_1, u_2$ be words such that $u_1 t f t^{-1} u_2$ is a reduced word. Let us set $u := u_1 u_2$ and $v := \hat{\rho}(u_1 t f t^{-1} u_2)$. Then there exists a word $u_0$ which is the reduced form of a cyclic permutation of $u$ such that either
	\begin{equation} \label{u_0*fSim.c} v \sim u_0*f \end{equation}	
or there exists a non-empty word $h$ such that 
	\begin{equation} \label{u_0hfh^{-1}.c} v \sim u_0 h f h^{-1}.\end{equation} 	
	
Finally the identities among relations involving $u, v, u^{-1}, v^{-1}$ that by Remark \ref{idFrom} follow from (\ref{u_0*fSim.c}) and (\ref{u_0hfh^{-1}.c}) are basic.
\end{proposition}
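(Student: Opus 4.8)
The plan is to deduce Proposition \ref{tecLemma2c} from the results already proved, namely Proposition \ref{tecLemma2} and Proposition \ref{tecLemma2b}, by splitting on whether $u_1$ is empty. The one computation that makes this work is the observation that $u_1 t f t^{-1} u_2$ and $u_2 u_1 t f t^{-1}$ are cyclic permutations of each other: writing $u_1 t f t^{-1} u_2 = (u_1 t f t^{-1})(u_2)$ and rotating gives $(u_2)(u_1 t f t^{-1}) = u_2 u_1 t f t^{-1}$. Hence, by Corollary \ref{cycRedPerm}, $v = \hat{\rho}(u_1 t f t^{-1} u_2)$ is a cyclic permutation of $\hat{\rho}(u_2 u_1 t f t^{-1})$, which is precisely the word treated in Proposition \ref{tecLemma2b}.

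If $u_1 = 1$, then $u = u_2$ and $v = \hat{\rho}(t f t^{-1} u)$, so $f, t, u$ satisfy the hypotheses of Proposition \ref{tecLemma2} verbatim. That proposition produces a cyclic permutation $u_0$ of $u$ with either $v \sim u_0 * f$ or $v = h f h^{-1} u_0$ for some non-empty $h$, the associated identities among relations being basic. In the first alternative I would replace $u_0$ by $\rho(u_0)$, which is harmless since $u_0 * f = \rho(u_0) * f$ by (\ref{u*vNonRed}) of Proposition \ref{summar}, obtaining (\ref{u_0*fSim.c}); in the second, since $h f h^{-1} u_0 = (h f h^{-1})(u_0) \sim (u_0)(h f h^{-1})$, we get $v \sim u_0 h f h^{-1}$, i.e. (\ref{u_0hfh^{-1}.c}). (If moreover $u_2 = 1$ the statement is immediate with $u_0 = 1$, since then $v = \hat{\rho}(t f t^{-1}) = \hat{\rho}(f) = 1 * f$ by Corollary \ref{permCycRedForm}.) For the identity-among-relations claim, the first alternative is given directly by Proposition \ref{tecLemma2}, while for the second I would use Remark \ref{idFrom2} to rewrite the identity coming from $v = h f h^{-1} u_0$ as the one coming from the equivalent $v \sim u_0 h f h^{-1}$, and Remark \ref{basicToo} to conclude it is still basic.

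If $u_1 \neq 1$, then $u_1 t f t^{-1} u_2$ is reduced with $u_1 \neq 1$, so Proposition \ref{tecLemma2b} applies to $f, t, u_1, u_2$ and yields a word $u_0$, the reduced form of a cyclic permutation of $u = u_1 u_2$, such that $\hat{\rho}(u_2 u_1 t f t^{-1}) \sim u_0 * f$ or $\hat{\rho}(u_2 u_1 t f t^{-1}) \sim u_0 h f h^{-1}$ for some non-empty $h$, with basic associated identities. Composing this with the equivalence $v \sim \hat{\rho}(u_2 u_1 t f t^{-1})$ from the first paragraph and using transitivity of $\sim$ gives (\ref{u_0*fSim.c}) or (\ref{u_0hfh^{-1}.c}) for $v$ itself. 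Since $v$ differs from $\hat{\rho}(u_2 u_1 t f t^{-1})$ by a cyclic permutation, Remark \ref{idFrom2} expresses the identity among relations following from the resulting equivalence for $v$ in terms of the (basic) identity following from the corresponding equivalence for $\hat{\rho}(u_2 u_1 t f t^{-1})$, and Remark \ref{basicToo} shows it is again basic. The propositional content is thus an almost immediate consequence of Propositions \ref{tecLemma2} and \ref{tecLemma2b}; the only point needing genuine care is this last bookkeeping of identities through the cyclic permutations, which is exactly what Remarks \ref{idFrom2} and \ref{basicToo} are designed to handle, so it is routine.
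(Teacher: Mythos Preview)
Your proposal is correct and follows essentially the same route as the paper: split on whether $u_1 = 1$, invoke Proposition \ref{tecLemma2} in the first case and Proposition \ref{tecLemma2b} in the second, use Corollary \ref{cycRedPerm} to pass from $\hat{\rho}(u_2 u_1 t f t^{-1})$ to $v$, and finish the identity-among-relations claim via Remarks \ref{idFrom2} and \ref{basicToo}. If anything you are slightly more careful than the paper in the $u_1=1$ case, explicitly handling the passage from $v = h f h^{-1} u_0$ to $v \sim u_0 h f h^{-1}$ and replacing $u_0$ by $\rho(u_0)$ where needed.
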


\begin{proof} If $u_1 = 1$ the claim follows from Proposition \ref{tecLemma2}. Let $u_1 \neq 1$; by Lemma \ref{tecLemma2b} we have that if we set $v' := \hat{\rho}(u_2 u_1 t f t^{-1})$ then there exists a word $u_0$ which is the reduced form of a cyclic permutation of $u$ such that either $v' \sim u_0*f$ or there exists a non-empty word $h$ such that $v' \sim u_0 h f h^{-1}$. Moreover the identities among relations involving $u, v, u^{-1}, v^{-1}$ that by Remark \ref{idFrom} follow from these equivalences are basic.

The first part of the claim follows from this lemma because $v \sim v'$; the second part follows from Remark \ref{basicToo}.
\end{proof}

\newpage

\textit{Address:}

Carmelo Vaccaro

Laboratoire de Mathématiques d'Orsay

Université Paris-Saclay 

Bâtiment 307, rue Michel Magat

91400 Orsay

\medskip

\textit{e-mail:} \textsf{carmelo.vaccaro@universite-paris-saclay.fr}

 \end{document}